\newtheorem{definition}{Definition}
\newtheorem{Theorem}{Theorem}
\newtheorem{Lemma}{Lemma}
\newtheorem{Problem}{Problem}
\newtheorem{Remark}{Remark}
\newtheorem{Assumption}{Assumption}
\newcommand{\mc}[1]{\mathcal{#1}}
\newcommand{\mb}[1]{\mathbb{#1}}
\newcommand{\abs}[1]{\lVert{#1} \rVert}
\newcommand\numberthis{\addtocounter{equation}{1}\tag{\theequation}}
	\tikzstyle{frame} = [draw, -latex]
	\tikzstyle{line} = [draw]
	\tikzstyle{line2} = [draw, dashdotted]
	\tikzstyle{line3} = [draw, dashed]
	\tikzstyle{line3UD} = [draw, dashed]
	\tikzstyle{place} = [circle, draw=black, fill=white, thick, inner sep=2pt, minimum size=1mm]
	\tikzstyle{place2} = [circle, draw=black, fill=black, thick, inner sep=2pt, minimum size=1mm]
	\tikzstyle{placeRed} = [circle, draw=red, fill=red, thick, inner sep=2pt, minimum size=1mm]
	\tikzstyle{vertex} = [circle, draw=black, fill=black, thick, inner sep=2pt, minimum size=1mm]
\tikzset{
    right angle quadrant/.code={
        \pgfmathsetmacro\quadranta{{1,1,-1,-1}[#1-1]}     
        \pgfmathsetmacro\quadrantb{{1,-1,-1,1}[#1-1]}},
    right angle quadrant=1, 
    right angle length/.code={\def\rightanglelength{#1}},   
    right angle length=1.4ex, 
    right angle symbol/.style n args={3}{
        insert path={
            let \p0 = ($(#1)!(#3)!(#2)$) in     
                let \p1 = ($(\p0)!\quadranta*\rightanglelength!(#3)$), 
                \p2 = ($(\p0)!\quadrantb*\rightanglelength!(#2)$) in 
                let \p3 = ($(\p1)+(\p2)-(\p0)$) in  
            (\p1) -- (\p3) -- (\p2)
        }
    }
}
\title{\LARGE \bf
Direction-only Orientation Alignment of Leader-Follower Networks
}
\author{Quoc Van Tran, Hyo-Sung Ahn, and Jinwhan Kim
\thanks{Q. V. Tran and J.H. Kim are with the Department of Mechanical Engineering, Korea Advanced Institute of Science and Technology (KAIST), Daejeon, Republic of Korea. E-mails: {\tt\small $\{$quoctran;jinwhan$\}$@kaist.ac.kr}}
\thanks{H.-S. Ahn is with the School of Mechanical Engineering, Gwangju Institute of Science and Technology, Gwangju, Republic of Korea. E-mail: {\tt\small hyosung@gist.ac.kr}}
}
\begin{document}

\maketitle
\thispagestyle{empty}
\pagestyle{empty}

\begin{abstract}
When a team of agents, such as unmanned aerial/underwater vehicles, are operating in $3$-dimensional space, their coordinated action in pursuit of a cooperative task generally requires all agents to either share a common coordinate system or know the orientations of their coordinate axes with regard to the global coordinate frame. Given the coordinate axes that are initially unaligned, this work proposes an orientation alignment scheme for multiple agents with a type of leader-following graph typologies using only inter-agent directional vectors, and the direction measurements to one or more landmarks of the first two agents. The directional vectors are expressed in the agents' body-fixed coordinate frames and the proposed alignment protocol works exclusively with the directional vectors without the need of a global coordinate frame common to all agents or the construction of the agents' orientation matrices. Under the proposed alignment scheme, the orientations of the agents converge almost globally and asymptotically to the orientation of the leader agent. Finally, numerical simulations are also given to illustrate the effectiveness of the proposed method.
\end{abstract}

\section{Introduction}
Distributed coordination control of a group of multiple agents has attracted much research interest over the past few decades \cite{Oh2015survey, HSAhn2019,ZFeng2020Review, ZPeng2021TI}. This is due to the enhanced efficacy, scalability, robustness, and ability to perform complex group-level missions by the deployment of a team of agents rather than a single agent. Therefore, distributed coordination control over multi-agent systems finds applications in various engineering disciplines including formation control \cite{Oh2015survey}, sensor network localization/synchronization \cite{HSAhn2019,Quoc2020auto}, and cooperative exploration and manipulation \cite{ZFeng2020Review, ZPeng2021TI}. When multiple mobile agents are operating in $3$-dimensional space, their coordinated action in pursuit of a cooperative task generally requires all agents to either share a common coordinate frame or know the orientations of their body-fixed coordinate axes with regard to the global (north-east-down) reference frame. If the agents' coordinate axes are initially unaligned, either
orientation estimation \cite{Tron2014tac, Lee2017, Quoc2019necsys, Quoc2018cdc, Quoc2018tcns,Leonardos2019cdc} or orientation alignment (or synchronization) schemes \cite{Igarashi2009, Sarltte2009, Thunberg2017,Markdahl2018tac, Gui2018auto, Zong2016, JWei2018tac, Thunberg2016} may be employed so that the agents can compensate for the misalignment of their local coordinate systems.

In the $2$-dimensional plane, if each agent measures the directions, i.e., the bearing angles, to its neighboring agents, then two neighboring agents can easily compute the relative orientation angle between their coordinate systems \cite{Oh2014tac}. 
For a system of multiple agents, using the relative angles between the coordinate axes of the agents, distributed orientation alignment \cite{Oh2014tac} and orientation estimation \cite{Lee2016auto} on the circle are proposed, respectively. However, in $3$-dimensional space, a pair of direction measurements between two neighboring agents are insufficient for the two agents to determine their \textit{relative orientation}, i.e., a rotation matrix in the Special Orthogonal group $SO(3)$, between their local coordinate systems due to the flexibility of the rotation along the common direction from one to the other agent \cite{Quoc2018cdc}. This can be overcome by examining additional direction constraints of each of the two agents to a third agent or a landmark that they both observe. Indeed, as shown in \cite{Quoc2018cdc,Leonardos2019cdc}, by exploiting the triangulation sensing network, two neighboring agents can compute their relative orientation matrix. The agents' orientations then can be computed by using a consensus-based protocol up to a common rotation \cite{Quoc2018cdc} or a Riemannian gradient descent algorithm \cite{Leonardos2019cdc}. Distributed orientation estimation based on inter-agent direction measurements in leader-follower networks with two leaders aware of their actual orientations was proposed in \cite{Quoc2020auto}.

The distributed alignment control of the agents' local coordinate frames (also known as synchronization on $SO(3)$) has been investigated in \cite{Igarashi2009, Sarltte2009, Thunberg2017,Markdahl2018tac, Gui2018auto, Zong2016, JWei2018tac, Thunberg2016} based on relative orientations between the agents. 
Due to the topological obstruction associated with the rotation matrix set $SO(3)$ \cite{Bhat2000SCL}, (distributed) continuous orientation consensus laws on $SO(3)$ can ensure only local \cite{Igarashi2009,Moshtagh2009tro,Oh2014tac, Thunberg2014} or at most almost global stability \cite{Markdahl2018tac}. 
To achieve almost global synchronization on $SO(3)$, \cite{Markdahl2018tac} proposed a consensus protocol by combining two (almost) global consensus schemes that are designed for the circle and $2$-sphere, respectively. Global orientation consensus on $SO(3)$ can be achieved using quaternion-based hybrid feedback laws \cite{Mayhew2012tac, Gui2018auto}.  A finite-time leader-following orientation consensus scheme was presented in \cite{Gui2018auto} based on the distributed observation of the leader's orientation and the quaternion representation of rotations. However, since the local representations of rotations, e.g., the unit-quaternion \cite{Gui2018auto} or the angle-angle representation \cite{Thunberg2014}, double cover the $SO(3)$ space, control protocols using these local representations may experience the undesirable unwinding phenomenon\cite{Bhat2000SCL}.

In this work, we aim to investigate the orientation alignment for multi-agent systems with directed graph typologies based only on the measurements of inter-agent directional vectors and the direction measurements to one or more landmarks (or the third agent if no landmarks are available) of the first two agents. 
The orientation alignment problem under study is motivated by the collective behaviors in nature, such as the flocking of birds and schooling of fishes. In a navigation task, there are several leaders which can sense directions to objects in the outside environments, and a number of other follower agents which track the leaders by only sensing directions to their neighboring agents. 
We note that the leader-follower types of structures have been extensively studied in the field of (distributed) networked coordination control \cite{Oh2015survey, ZPeng2021TI, HSAhn2019}. For example, in coordination control of autonomous surface vessels (ASVs), it is often desired to have a leader which guilds the motion of the system and follower agents which follow the leader, while the whole system maintains a certain formation pattern \cite{ZPeng2021TI}. 

The specific contributions of this work are as follows. First, we propose an orientation alignment scheme for multiple agents with a leader-following graph typology based on only inter-agent directional vectors and the direction measurements to one or more landmarks (or the third agent) of the first two agents. The proposed orientation control protocol for each agent is in the form of a gradient-based control law associated with an error function which is a weighted sum of the misalignment of the directional vectors measured by the agent and its neighbors. Thus, the proposed alignment protocol works exclusively with the directional vectors with no need for a global coordinate frame common to all agents or the computation of (relative) orientation matrices, as opposed to \cite{Quoc2018cdc,Leonardos2019cdc,Igarashi2009, Sarltte2009, Thunberg2017,Markdahl2018tac, Gui2018auto, Zong2016, JWei2018tac, Thunberg2016}. The proposed orientation alignment scheme is an extension of the orientation localization law in \cite{Quoc2020auto} to the orientation control problem. In addition, in contrast to \cite{Quoc2020auto}, the requirement of two leaders that are aware of their true orientations is relaxed by utilizing their direction measurements to nearby landmarks (or a common neighbor). Second, the equilibrium points of the orientation control system are characterized, in which the steady-state orientation matrices of the agents constitute the critical points of the associated error functions. Further, we show that the orientations of all follower agents converge almost globally and asymptotically to the orientation of the leader agent. Finally, numerical simulations are given to support and illustrate the theoretical development.

The remainder of this paper is outlined as follows. Section \ref{sec:preliminary} presents preliminaries and formulates the orientation alignment problem. The orientation alignment laws are proposed  and an almost global stability analysis is established in Section \ref{sec:orientation_align}. Section \ref{sec:simulation} provides simulation results. Finally, Section \ref{sec:Conclusion} concludes this paper.

\section{Preliminaries and Problem Formulation}\label{sec:preliminary} 
\subsubsection*{Notation} 
The dot and cross products are denoted by $\cdot$ and $\times$, respectively. The symbol $\Sigma$ represents a global coordinate frame and the symbol $^k\Sigma$ with superscript $k$ denotes the $k$-th local coordinate frame. A vector $\bm{x}\in \mb{R}^3$ expressed in $^k\Sigma$ and $\Sigma$ are denoted as $\bm{x}^k$ and $\bm{x}$, respectively. Let $\bm{1}_n=[1,\ldots,1]^\top\in \mb{R}^n$ be the vector of all ones, and $\bm{I}_3$ denotes the $3\times 3$ identity matrix. 
The trace of a matrix is denoted by $\text{tr}(\cdot)$. 
The set of rotation matrices and orthogonal matrices in $\mb{R}^3$ are denoted by $SO(3)\}$ and $O(3)$, respectively. For a symmetric matrix $\bm{X}$, $\bm{X}\succ 0$ implies that $\bm{X}$ is positive semidefinite and $\lambda(\bm{X})$ denotes the set of its eigenvalues.

We denote the set of $3\times 3$ skew-symmetric as $\mathfrak{so}(3):=\{\bm{A}\in \mb{R}^{3\times 3}|\bm{A}^\top=-\bm{A}\}$. For any $\omega\in \mb{R}^3$, the \textit{hat} map $(\cdot)^{\wedge}:~\mb{R}^3\rightarrow \mathfrak{so}(3)$ is defined such that $\omega\times \bm{v}=\omega^\wedge\bm{v},\forall \bm{v}\in \mb{R}^3$.
The \textit{vee} map is the inverse of the \textit{hat} map and defined as $(\cdot)^\vee:~\mathfrak{so}(3)\rightarrow \mb{R}^3$. The \textit{exponential map} $exp:\mathfrak{so}(3)\rightarrow SO(3)$ is \textit{surjective} and $T_{\bm{R}}SO(3)=\{\bm{R}\eta^\wedge:\eta^\wedge\in \mathfrak{so}(3)\}$ denotes the tangent space at a point $\bm{R}\in SO(3)$.

For any $\bm{x},\bm{y},\bm{z}\in \mb{R}^3$, $\bm{A},\bm{B}\in \mb{R}^{3\times 3}$, and $\bm{R}\in SO(3)$ we have the following relations \cite{Bullo2005spr, Mahony2008tac}.
\begin{align}
&\qquad\bm{x}\times \bm{y}=-\bm{y}\times \bm{x}  \label{eq:cross_prod_1} \\
&(\bm{R}\bm{x})\times(\bm{R}\bm{y})=\bm{R}(\bm{x}\times\bm{y}),~\bm{R}\bm{x}^\wedge\bm{R}^\top=[\bm{Rx}]^\wedge \label{eq:cross_prod_2}\\
&\qquad (\bm{x}\times\bm{y})^\wedge=\bm{x}^\wedge\bm{y}^\wedge-\bm{y}^\wedge\bm{x}^\wedge=\bm{y}\bm{x}^\top-\bm{x}\bm{y}^\top\label{eq:cross_prod_3}\\
&\qquad\bm{x}\cdot\bm{y}^\wedge\bm{z}=\bm{z}\cdot\bm{x}^\wedge\bm{y} =\bm{y}\cdot\bm{z}^\wedge\bm{x}\label{eq:cross_prod_4}\\
&\qquad \bm{x}\times(\bm{y}\times \bm{z})+\bm{y}\times(\bm{z}\times \bm{x})+\bm{z}\times(\bm{x}\times \bm{y})=0 \label{eq:Jacobi_indentity}\\
&\qquad\bm{x}\cdot\bm{y}=\bm{x}^\top\bm{y} = \text{tr}(\bm{x}\bm{y}^\top) \label{eq:trace_property_1}\\
&\qquad \text{tr}(\bm{A}\bm{B})=\text{tr}(\bm{B}\bm{A})=\text{tr}(\bm{A}^\top\bm{B}^\top)\label{eq:trace_property_3}
\end{align}

\subsection{Graph Theory}
An interaction graph of a multi-agent network is denoted by $\mc{G}=(\mc{V},\mc{E})$, where, $\mc{V}=\{1,\ldots,n\}$ denotes the vertex set and $\mc{E}\subseteq\mc{V}\times \mc{V}$ denotes the set of edges of $\mc{G}$. An edge is defined by the ordered pair $e_k=(i,j), k=1,\ldots,m, ~i,j\in \mc{V},~i\neq j,$ with $m=\vert \mathcal{E} \vert$ being the number of edges. The graph $\mc{G}$ is said to be \textit{undirected} if $(i,j)\in \mc{E}$ implies $(j,i)\in \mc{E}$, or equivalently, $j$ and $i$ are neighbors of each other. If the graph $\mc{G}$ is directed, $(i,j)\in \mc{E}$ does not necessarily imply $(j,i)\in \mc{E}$. The set of neighboring agents of $i$ is given by $\mc{N}_i=\{j\in\mc{V}:(i,j)\in \mc{E}\}$. 

\subsection{Problem formulation}
We consider the system of $n$ stationary agents and some non-colocated landmarks, e.g., features in the environment, in the three-dimensional space. Associated with each agent $i$, there are a position vector, $\bm{p}_i\in \mb{R}^3$, taken at its centroid, and a body-fixed coordinate frame, $^i\Sigma$. The orientation of the local coordinate frame $^i\Sigma$ relative to the global coordinate frame is denoted as $\bm{R}_i\in SO(3)$. 
We define the directional vector from an agent $i$ to a neighbor $j$ as
\begin{equation}\label{eq:direction_vector}
\bm{b}_{ij}^i = \bm{R}_i^\top\frac{\bm{p}_j-\bm{p}_i}{||\bm{p}_j-\bm{p}_i||}=\bm{R}_i^\top\bm{b}_{ij},
\end{equation}
and similarly, the directional vector pointing from agent $j$ to agent $i$ is $\bm{b}_{ji}^j=\bm{R}_j^\top\bm{b}_{ji}$, where in the global coordinate frame $\bm{b}_{ji}=\frac{\bm{p}_i-\bm{p}_j}{||\bm{p}_i-\bm{p}_j||}=-\bm{b}_{ij}$.
\begin{figure}[t]
\centering
\begin{tikzpicture}[scale=1.4]
\node (pz) at (0,1,0) []{};
\node (bij) at (1.15,0.5,0) {};
\node (bji) at (1.2,.5,0) {};
\node (p_j) at (2.5,1,0) [label=below:$2$]{};
\node[place] (p_i) at (0,0,0) [label=left:$1$]{};
\node (w_i) at (-0.4,.6,0) [label=left:${^1\Sigma}$]{};
\node (Sigj) at (2.4,1.7,0) [label=right:${^2\Sigma}$]{};
\node[place, black] (x) at (.3,1.6) [label=left:$x$]{};
\draw[{line width=0.7pt},blue] (0,0,0) [frame] -- (0.7,0,0);
\draw[{line width=0.7pt},blue,->] (0,0,0)[frame]   -- (w_i);
\draw[{line width=0.7pt},blue] (0,0,0) [frame] -- (pz);
\draw[{line width=0.7pt},blue] (p_j) [frame] -- (2.1,1.4,0);
\draw[{line width=0.7pt},blue,->] (p_j)[frame]  -- (3,1.4,0);
\draw[{line width=0.7pt},blue] (p_j) [frame] -- (2.5,1.8,0);
\draw[shorten >=1.9cm, ->,line width=1.pt] (p_i) [frame] -- (p_j) node [pos =0.4, yshift=-0.3ex, below] {$\bm{b}_{12}^1$};
\draw[shorten >=1.9cm, ->,line width=1.pt] (p_j) [frame] -- (p_i) node [pos =0.2, yshift=-0.3ex, below] {$\bm{b}_{21}^2$};
\draw[shorten >=0.75cm, ->,line width=1.pt] (p_i) [frame] -- (x) node [pos =0.5, yshift=-0.ex, right] {$\bm{b}_{1x}^1$};
\node (b2x) at (1.3,1.3,0) {};
\draw[shorten >=1.1cm, ->,line width=1.pt] (p_j) [frame] -- (x) node [pos =.5, yshift=-0.2ex, above right] {$\bm{b}_{2x}^2$};
\draw[{line width=.7pt},black, dashed] (0,0,0)  -- (p_j);
\draw[{line width=.7pt},black, dashed] (0,0,0)  -- (x);
\draw[{line width=.7pt},black, dashed] (p_j)  -- (x);
\node[place] () at (p_j) []{};
\end{tikzpicture}
\caption{Agents $1$ and $2$ measure the directional vector $\bm{b}_{ix}^i,i=1,2$ to a common landmark $x\in \mc{V}_a$, and the directions to each other $\bm{b}_{12}^1$  and $\bm{b}_{21}^2$, respectively.} 
\label{fig:agents_1_2}
\end{figure}
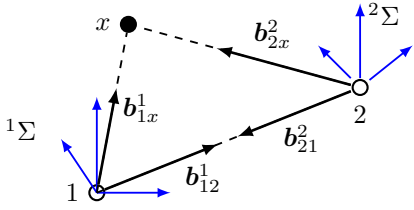
\begin{figure}[t]
\centering
\begin{subfigure}[t]{0.23\textwidth}
\centering
\begin{tikzpicture}[scale=1.4]
\node[place] (p1) at (0,0) [label=left:$1$]{};
\node[place] (p2) at (1,0) [label=right:$2$]{};
\node[place] (p3) at (0.5,-0.75) [label=below right:$3$]{};
\node[place] (p4) at (-0.5,-0.75) [label=left:$4$]{};
\node[place] (p5) at (1.5,-0.75) [label=right:$5$]{};
\node[place, black] (x1) at (0.25,0.5) [label=left:$x_1$]{};
\node[place, black] (x2) at (.9,0.5) [label=right:$x_2$]{};

\draw[line width=1pt,->] (p3)[frame]  -- (p1);
\draw[line width=1pt,->] (p3)[frame]  -- (p2);
\draw[line width=1pt,->] (p4)[frame]  -- (p3);
\draw[line width=1pt,->] (p4)[frame]  -- (p1);
\draw[line width=1pt] (p2)  -- (p1);
\draw[line width=1pt,->] (p5)[frame]  -- (p3);
\draw[line width=1pt,->] (p5)[frame]  -- (p2);
\draw[line width=1pt,->, dashed] (p1)[frame]  -- (x1);
\draw[line width=1pt,->, dashed] (p2)[frame]  -- (x1);
\draw[line width=1pt,->, dashed] (p1)[frame]  -- (x2);
\draw[line width=1pt,->, dashed] (p2)[frame]  -- (x2);
\draw[line width=1pt,->] (p2)[frame]  -- (p1);
\end{tikzpicture}
\caption{$\mc{G}=(\mc{V},\mc{E})$}
\label{subfig:info_flow}
\end{subfigure}
\begin{subfigure}[t]{0.23\textwidth}
\centering
\begin{tikzpicture}[scale=1.4]
\node[place] (p1) at (0,0) [label=left:$1$]{};
\node[place] (p2) at (1,0) [label=right:$2$]{};
\node[place] (p3) at (0.5,-0.75) [label=below right:$3$]{};
\node[place] (p4) at (-0.5,-0.75) [label=left:$4$]{};
\node[place] (p5) at (1.5,-0.75) [label=right:$5$]{};

\draw[line width=1pt] (p3)  -- (p1);
\draw[line width=1pt] (p3)  -- (p2);
\draw[line width=1pt] (p4)  -- (p3);
\draw[line width=1pt] (p4)  -- (p1);
\draw[line width=1pt] (p2)  -- (p1);
\draw[line width=1pt] (p5) -- (p3);
\draw[line width=1pt] (p5) -- (p2);
\draw[line width=1pt] (p1) -- (p2);
\end{tikzpicture}
\caption{Sensing graph}
\label{subfig:direction_graph}
\end{subfigure}
\caption{A leader-follower network and two landmarks in $\mb{R}^3$ (a). The underlying undirected graph of the network characterizes the direction sensing between the agents (b).}
\label{fig:leader_follower_network}
\end{figure}
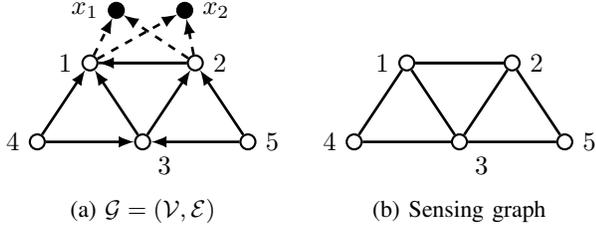
The \textit{leader-follower} system studied in this work is constructed as follows.
\begin{definition}\label{def:leader_follower}
A \textit{leader-follower} network is a directed network whose agents are ordered such that: (a) the first agent $1$ is the only neighbor of agent $2$; they locally measure the directions between them, i.e., $\{\bm{b}_{12}^1,\bm{b}_{21}^2\}$, and directional vectors to nearby landmarks (see e.g., Fig. \ref{fig:agents_1_2}); (b) an agent $i,~ 3\leq i \leq n,$ has two (or more) neighboring agents $j\in\{1,\ldots,i-1\}$. Agent $i$ knows the direction $\bm{b}^i_{ij}$ to the neighbor $j$, while its neighbor knows the direction $\bm{b}^j_{ji}$ to agent $i$.
\end{definition}

We denote by $\mc{V}_a:=\{x_1,x_2,\ldots\}$ the set of landmarks. The interactions of the leader-follower network is characterized by an (\textit{acyclic}) directed graph $\mc{G}=(\mc{V},\mc{E})$.  
Agent $1$ need not know its actual orientation matrix nor the global coordinate frame. In addition, we aim to align the other agents' orientations to the orientation of agent $1$, i.e., $\bm{R}_1$, as will be studied in Section \ref{sec:orientation_align}. Therefore, agent $1$, whose orientation is fixed, will be referred to as the \textit{leader}; the other agents in $\{2,\ldots,n\}$ will be called \textit{follower agents}. We will also assume that each follower has precisely two neighbors, for simplicity and the minimal connectivity of the sensing graph.

The rotational dynamics of agent $i$ is given by following kinematic equation:
\begin{equation}\label{eq:rotation_dynamics}
\dot{\bm{R}}_i=\bm{R}_i(\bm\omega_i^i)^\wedge,
\end{equation}
where $\omega_i^i$ is the angular velocity of agent $i$, expressed in $^i\Sigma$. We adopt the following assumptions.
\begin{Assumption}\label{ass:direction_measurements}
If there is a directed edge $(i,j)\in \mc{E}$, agents $i$ and $j$ measure the directional vectors $\bm{b}_{ij}^i$ and $\bm{b}_{ji}^j\in \mb{R}^3$ with respect to their local coordinate frames, respectively. In addition, agent $i$ can also received information communicated from neighboring agent $j\in \mc{N}_i$.
\end{Assumption}

Thus, the directions of the edges in the graph $\mc{G}$ specify the information flow in the network (see Fig. \ref{subfig:info_flow}). In addition, since the direction measurements between any two neighboring agents is bidirectional, the underlying (undirected) graph of the network, which can be verified to be bearing-rigid \cite{Minh2020tac},
characterizes the direction sensing over the network (see Fig. \ref{subfig:direction_graph}).
\begin{Assumption}\label{ass:non_colocated}
Agents $1$ and $2$ are not collinear with each $x_i\in\mc{V}_a$, and all landmarks and agents $1$ and $2$ are noncoplanar. No two agents are collocated, and each follower agent $i\in \mc{V}\setminus\{1,2\}$ and its two neighbors are not collinear.
\end{Assumption}

Assumption \ref{ass:non_colocated} requires that the positions of the first two agents and the landmarks, and the positions of each agent and its two or more neighbors are \textit{generic}. This assumption together with the rigidity of the direction sensing graph are used to guarantee the solvability of the orientation alignment problem. Further, it can be verified that to ensure the non-coplanarity of the landmarks and agents $1$ and $2$, there should be two or more landmarks, to which agents $1$ and $2$ sense the directional vectors.

The orientation alignment problem setup is motivated by the collective behaviors in nature, such as the flocking of birds and schooling of fish. When they perform a navigation task, there are leaders that can sense directions to objects or landmarks in the outside environment, and there are followers that track the leaders by sensing only directions to their neighboring agents. However, note importantly that our proposed method still works for the cases with only one or even no available landmark using the triangulation network of the first three agents (see Remark \ref{rmk:no_landmark_avail}). 
We can now state the main problem studied in this paper.
\begin{Problem}\label{prob:orient_syn}
Consider a leader-follower network defined in Definition \ref{def:leader_follower} of $n$ stationary agents using only inter-agent directional vectors. Under Assumptions \ref{ass:direction_measurements} and \ref{ass:non_colocated}, design a control law for each agent such that the orientations of all agents reach a consensus asymptotically.
\end{Problem}
\section{Orientation Alignment}\label{sec:orientation_align}
In this section, we first present orientation alignment scheme for the first follower agent $2$ based on the measured directions between agents $1$ and $2$ and their directions to the common landmarks. The orientation alignment laws for the other agents are then investigated.
\subsection{The first follower agent}
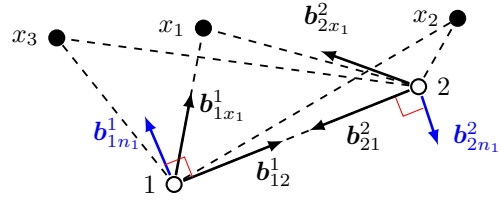
\begin{figure}[t]
\centering
\begin{tikzpicture}[scale =1.3]
\node (1) at (0,0,0) []{};
\node (bij) at (1.15,0.5,0) {};
\node (bji) at (1.2,.5,0) {};
\node (2) at (2.5,1,0) [label=right:$2$]{};
\node[place, black] (x) at (.3,1.6) [label=left:$x_1$]{};
\node[place, black] (y) at (2.9,1.7) [label=left:$x_2$]{};
\node[place, black] (x3) at (-1.2,1.5) [label=left:$x_3$]{};
\draw[shorten >=1.8cm, ->,line width=1.pt] (1) [frame] -- (2) node [pos =0.4, yshift=-0.3ex, below] {$\bm{b}_{12}^1$};
\draw[shorten >=1.8cm, ->,line width=1.pt] (2) [frame] -- (1) node [pos =0.2, yshift=-0.3ex, below] {$\bm{b}_{21}^2$};
\draw[shorten >=0.75cm, ->,line width=1.pt] (1) [frame] -- (x) node [pos =0.5, yshift=-0.ex, right]  {$\bm{b}_{1x_1}^1$};
\draw[{line width=1.pt}, blue] (1) [frame] -- (-0.3, 0.7) node [pos =0.7, yshift=-0.ex, left] {$\bm{b}_{1n_1}^1$};
\draw [right angle symbol={1}{2}{-0.3, 0.7}, red];
\draw[{line width=1.pt}] (2) [frame] -- (b2x) node [pos =1.3, yshift=-0.2ex, above right] {$\bm{b}_{2x_1}^2$};
\draw[{line width=1.pt}, blue] (2) [frame] -- (2.7,0.4) node [pos =1.3, yshift=-0.2ex, above right] {$\bm{b}_{2n_1}^2$};
\draw [right angle symbol={2}{1}{2.7,0.4}, red];
\draw[{line width=.7pt},black, dashed] (1)  -- (2);
\draw[{line width=.7pt},black, dashed] (1)  -- (x);
\draw[{line width=.7pt},black, dashed] (2)  -- (x);
\draw[{line width=.7pt},black, dashed] (1)  -- (y);
\draw[{line width=.7pt},black, dashed] (2)  -- (y);
\draw[{line width=.7pt},black, dashed] (1)  -- (x3);
\draw[{line width=.7pt},black, dashed] (2)  -- (x3);
\node[place] (p_i) at (0,0,0) [label=left:$1$]{};
\node[place] () at (2) []{};
\end{tikzpicture}
\caption{Illustration of the computation of directions of agents $1$ and $2$ to some landmarks $x_j$ (black nodes).} 
\label{fig:control_law_agents12}
\end{figure}
It is first noted that agents $1$, $2$ and any landmark $x_1\in \mc{V}_a$ form a plane in $\mb{R}^3$. Using the directional measurements $\{\bm{b}_{12}^1,\bm{b}_{1x_1}^1,\bm{b}_{21}^2,\bm{b}_{2x_1}^2\}$ (see also Fig. \ref{fig:agents_1_2}), each agent $i\in \{1,2\}$ additionally computes the following unit vector $\bm{b}_{in_1}^i$ corresponding to the landmark $x_1$:
\begin{align}
\bm{b}_{in_1}^i &=\frac{\bm{b}_{ij}^i\times \bm{b}_{ix_1}^i}{||\bm{b}_{ij}^i\times \bm{b}_{ix_1}^i||},\label{eq:agent12_normal_direct}
\end{align}
which is perpendicular to the plane ($1,2,x_1$) (see Fig. \ref{fig:control_law_agents12}). Thus, $\bm{b}_{1n_k}+\bm{b}_{2n_k}=0$ for all $x_k\in \mc{V}_a$. In addition, $\bm{b}_{12}=-\bm{b}_{21}\Longleftrightarrow\bm{R}_1\bm{b}_{12}^1=-\bm{R}_2\bm{b}_{21}^2\Longleftrightarrow\bm{b}_{12}^1=-\bm{R}_{12}\bm{b}_{21}^2$, where $\bm{R}_{12}:=\bm{R}_1^\top\bm{R}_2$ denotes the relative orientation between the local coordinate frames $^1\Sigma$ and $^2\Sigma$. Obviously, when the two local coordinate frames are aligned we have that $\bm{b}_{12}^1+\bm{b}_{21}^2=\bm{0}$ and $\bm{b}_{1n_k}^1+\bm{b}_{2n_k}^2=0$ for all $x_k\in \mc{V}_a$.

The time derivative of a directional vector $\bm{b}_{21}^2$ is given as
\begin{equation}\label{eq:derivative_of_direction}
\dot{\bm{b}}_{21}^2=\frac{d}{dt}(\bm{R}_2^\top\bm{b}_{21})=-({\bm\omega_2^2})^\wedge\bm{R}_2^\top\bm{b}_{21}=\bm{b}_{21}^2\times {\bm\omega_2^2}.
\end{equation}
Similarly, for each $x_j\in\mc{V}_a$, one also has $\dot{\bm{b}}_{2n_j}^2={\bm{b}}_{2n_j}^2\times \bm\omega_2^2$. Since the first agent does not rotate (i.e., $\bm\omega_1^1=\bm{0}$), $\dot{\bm{b}}^1_{1k}=\bm{0}$ for any $k$.

In the sequel, we define an error function as a weighted sum of squares of the misalignment of the measured directions by agents $1$ and $2$. We then characterize its critical points, at which the time derivative of the error function equals zero. It will be shown that the error function achieves the minimum at the desired aligned orientation (i.e., $\bm{R}_{1}^\top\bm{R}_2=\bm{I}_3$) and can be minimized using our proposed orientation alignment law.

\subsubsection{Error function}
Consider the error function associated with agent $2$ as
\begin{align*}\label{eq:error_function12}
\Phi_{2}&=\frac{1}{2}k_{21}||\bm{b}_{12}^1+\bm{b}_{21}^2||^2+\frac{1}{2}\sum_{x_l\in\mc{V}_a}k_{2x_l}||\bm{b}_{1n_l}^1+\bm{b}_{2n_l}^2||^2\\
&=k_{21}(1+\bm{b}_{12}^1\cdot\bm{b}_{21}^2)+\sum_{x_l\in\mc{V}_a}k_{2x_l}(1+\bm{b}_{1n_l}^1\cdot\bm{b}_{2n_l}^2),\numberthis
\end{align*}
where $k_{2j}>0,\forall j\in \mc{N}_1\cup\mc{V}_a$, are positive scalars. 
\subsubsection{Critical points of the error function}
To study the critical points of the error function $\Phi_{2}$, we rewrite it as follows:
\begin{align*}
\Phi_{2}&=k_{21}+\sum_{x_l\in\mc{V}_a}k_{2x_l}+k_{21}(\bm{R}_1^\top\bm{b}_{12})^\top(\bm{R}_2^\top\bm{b}_{21})\\
&+\sum_{x_l\in\mc{V}_a}k_{2x_l}(\bm{R}_1^\top\bm{b}_{1n_l})^\top(\bm{R}_2^\top\bm{b}_{2n_l})
\\
&=k_{21}+\sum_{x_l\in\mc{V}_a}k_{2x_l}-\mathrm{tr}(k_{21}\bm{R}_1^\top\bm{b}_{21}\bm{b}_{21}^\top\bm{R}_2)\\
&-\sum_{x_l\in\mc{V}_a}\mathrm{tr}(k_{2x_l}\bm{R}_i^\top\bm{b}_{2n_l}\bm{b}_{2n_l}^\top\bm{R}_2)\\
&=k_{21}+\sum_{x_l\in\mc{V}_a}k_{2x_l}-\mathrm{tr}(\tilde{\bm{Q}}_{2}\bm{K}_{2}), \numberthis \label{eq:error_function_rewrite1}
\end{align*}
where the second equality is derived using \eqref{eq:trace_property_1} and $\bm{b}_{1l}=-\bm{b}_{2l}$, and the last equality follows from \eqref{eq:trace_property_3}. Here, the matrices $\tilde{\bm{Q}}_{2}$ and $\bm{K}_{2}$ are respectively defined as
$$\tilde{\bm{Q}}_{2}:=\bm{R}_2\bm{R}_1^\top,~\bm{K}_{2}:=k_{21}\bm{b}_{21}\bm{b}_{21}^\top+\sum_{x_l\in\mc{V}_a}k_{2x_l}\bm{b}_{2n_l}\bm{b}_{2n_l}^\top.$$ 
\begin{Lemma}\label{lm:almost_surely_distinct}
Assume that Assumptions \ref{ass:direction_measurements} and \ref{ass:non_colocated} hold. The matrix $\bm{K}_{2}\in \mb{R}^{3\times 3}$ in \eqref{eq:error_function_rewrite1} is positive definite and has distinct positive eigenvalues for almost all positive scalars $k_{21}$ and $\{k_{2x_l}\},x_l\in\mc{V}_a$.
\end{Lemma}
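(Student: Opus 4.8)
The plan is to separate the statement into its two assertions — positive definiteness of $\bm{K}_2$, and the genericity of having distinct eigenvalues — and to attack them with different tools. Positive definiteness is a deterministic geometric fact that I would deduce from Assumption~\ref{ass:non_colocated}, whereas distinctness of the eigenvalues for \emph{almost all} weights is a genericity statement that I would settle with a polynomial/measure-zero argument. Note first that $\bm{K}_2 = \sum_i k_i \bm{v}_i \bm{v}_i^\top$ is a strictly positively weighted sum of rank-one projectors, whose generating unit vectors are $\bm{b}_{21}$ and the plane normals $\{\bm{b}_{2n_l}\}$. For such a Gram-type sum, $\bm{z}^\top \bm{K}_2 \bm{z} = \sum_i k_i (\bm{v}_i^\top \bm{z})^2 \ge 0$, with equality only when $\bm{z}$ is orthogonal to every $\bm{v}_i$; hence $\bm{K}_2 \succ 0$ exactly when $\{\bm{v}_i\}$ spans $\mb{R}^3$, and the problem reduces to exhibiting three linearly independent generators.

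For the spanning step I would use the defining cross-product form $\bm{b}_{2n_l} = \bm{b}_{21}\times\bm{b}_{2x_l}/\|\bm{b}_{21}\times\bm{b}_{2x_l}\|$, which is well defined and nonzero precisely because agents $1,2$ and $x_l$ are not collinear (Assumption~\ref{ass:non_colocated}), and which is automatically orthogonal to $\bm{b}_{21}$. Thus every normal lies in the two-dimensional plane $\bm{b}_{21}^\perp$. Taking two landmarks $x_1,x_2$, the normals $\bm{b}_{2n_1}$ and $\bm{b}_{2n_2}$ are parallel iff the planes $(1,2,x_1)$ and $(1,2,x_2)$ coincide, i.e. iff $1,2,x_1,x_2$ are coplanar, which Assumption~\ref{ass:non_colocated} forbids. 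Hence $\bm{b}_{2n_1},\bm{b}_{2n_2}$ are two independent vectors spanning $\bm{b}_{21}^\perp$, and together with $\bm{b}_{21}$ they span $\mb{R}^3$, giving $\bm{K}_2\succ 0$ with all eigenvalues strictly positive.

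For distinct eigenvalues, the key observation is that the map $k \mapsto \bm{K}_2(k)$ from the weight vector $k=(k_{21},k_{2x_1},\dots)$ to symmetric matrices is linear, so the characteristic-polynomial coefficients are polynomials in $k$, and therefore so is its discriminant $\Delta(k)$; the matrix has a repeated eigenvalue exactly on $\{\Delta=0\}$. Since a nonzero polynomial has a Lebesgue-measure-zero zero set, it suffices to verify $\Delta\not\equiv 0$, i.e. to produce a single weight choice yielding three distinct eigenvalues. For this witness I would exploit the orthogonality established above: because $\bm{b}_{21}\perp\bm{b}_{2n_l}$, the direction $\bm{b}_{21}$ decouples and is always an eigenvector with eigenvalue $k_{21}$, while the restriction of $\bm{K}_2$ to $\bm{b}_{21}^\perp$ is $\sum_l k_{2x_l}\bm{b}_{2n_l}\bm{b}_{2n_l}^\top$, a $2\times 2$ positive definite matrix whose two eigenvalues vary continuously with the weights; choosing the $k_{2x_l}$ so that these two are distinct and both differ from $k_{21}$ produces three distinct eigenvalues.

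The main obstacle, and the step requiring the most care, is the geometric linear-independence argument: one must check cleanly that the cross-product normals are well defined (non-collinearity) and pairwise non-parallel (non-coplanarity), and then translate \emph{almost all scalars} precisely into the measure-zero statement through the discriminant. Once the spanning and orthogonality structure is in hand, the positive-definiteness conclusion and the construction of the distinct-eigenvalue witness are comparatively routine.
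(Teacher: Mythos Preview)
Your proof is correct and follows essentially the same approach as the paper: positive definiteness via the spanning of $\{\bm{b}_{21},\bm{b}_{2n_l}\}$ guaranteed by the non-coplanarity in Assumption~\ref{ass:non_colocated}, and distinctness of eigenvalues via the discriminant of the characteristic polynomial being a polynomial in the weights whose zero set has measure zero. Your version is in fact more thorough: you give the explicit cross-product reasoning for why the normals span $\bm{b}_{21}^\perp$, and you supply a witness (using $\bm{b}_{21}\perp\bm{b}_{2n_l}$ so that $k_{21}$ is always an eigenvalue) showing the discriminant is not identically zero---a step the paper's proof takes for granted.
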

\begin{proof} See Appendix \ref{app:almost_surely_distinct}.
\end{proof}
\begin{Remark}\label{rmk:no_landmark_avail}
The proof of the preceding lemma is based on the non-coplanarity assumption (Assumption \ref{ass:non_colocated}) for which the direction set $\{\bm{b}_{21},\bm{b}_{2n_l}\}_{x_l\in\mc{V}_a}$ spans the $\mb{R}^3$. If only one landmark is available, say $x_1$, the (normalized) cross product of the two directions $\bm{b}_{21}$ and $\bm{b}_{2n_1}$ (correspondingly, the cross product of $\bm{b}_{12}$ and $\bm{b}_{1n_1}$), which is perpendicular to both vectors, can be considered as the third direction constraint. Importantly, this computed direction together with $\bm{b}_{21}$ and $\bm{b}_{2n_1}$ span the $\mb{R}^3$. If no landmark is available, the direction measurements in the triangulation network $\{1,2,3\}$ can be used. Therefore, our proposed orientation alignment protocol for agent $2$ in the below would be applicable for the two cases.
\end{Remark} 

In the light of Lemma \ref{lm:almost_surely_distinct}, we can write $\bm{K}_{2}=\bm{U}\bm{G}_{2}\bm{U}^\top$ where $\bm{G}_{2}=\text{diag}\{\lambda(\bm{K}_{2})\}$ and $\bm{U}\in O(3)$. It can be shown that $\text{tr}(\bm{G}_{2})=\text{tr}(\bm{K}_{2})=k_{21}+\sum_{l\in\mc{V}_a}k_{2l}$. Thus, we can further rewrite $\Phi_{2}$ as:
\begin{align*}
\Phi_{2}&=\text{tr}(\bm{G}_{2}-\tilde{\bm{Q}}_{2}\bm{U}\bm{G}_{2}\bm{U}^\top)\\
&=\text{tr}(\bm{G}_{2}-\bm{U}^\top\tilde{\bm{Q}}_{2}\bm{U}\bm{G}_{2})\\
&=\text{tr}\big(\bm{G}_{2}(\bm{I}_3-\bm{U}^\top\tilde{\bm{Q}}_{2}\bm{U})\big).
\end{align*}
Then, we have the following lemmas:
\begin{Lemma}\cite[Prop. 11.31]{Bullo2005spr}\label{lm:stationary_points}
Let $\bm{G}$ be a diagonal matrix with distinct positive entries and $\bm{U}\in O(3)$. Then, $\Phi({\bm{Q}})=\text{tr}(\bm{G}(\bm{I}_3-\bm{U}^\top{\bm{Q}}\bm{U}))$ has four isolated critical points given by
\begin{equation*}{\bm{Q}}\in \{\bm{I}_3,\bm{U}\bm{D}_1\bm{U}^\top,\bm{U}\bm{D}_2\bm{U}^\top,\bm{U}\bm{D}_3\bm{U}^\top\},
\end{equation*}
where $\bm{D}_i=2[\bm{I}_3]_i[\bm{I}_3]_i^\top-\bm{I}_3$ and $[\bm{I}_3]_i$ is the $i$-th column vector of $\bm{I}_3$.
\end{Lemma}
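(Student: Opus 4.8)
The plan is to exploit the $O(3)$-conjugation symmetry to reduce the problem to a diagonal one, to read off the first-order condition as a matrix-symmetry constraint, and then to pin down the admissible rotations using the distinctness of the diagonal entries of $\bm{G}$. First I would substitute $\bm{P}:=\bm{U}^\top\bm{Q}\bm{U}$. Since $\bm{U}\in O(3)$ we have $\det(\bm{P})=\det(\bm{Q})$, so the map $\Psi:\bm{Q}\mapsto\bm{U}^\top\bm{Q}\bm{U}$ is a diffeomorphism of $SO(3)$ onto itself with inverse $\bm{P}\mapsto\bm{U}\bm{P}\bm{U}^\top$; as $\Phi(\bm{Q})=\psi(\Psi(\bm{Q}))$ with $\psi(\bm{P}):=\mathrm{tr}(\bm{G}(\bm{I}_3-\bm{P}))$, a point $\bm{Q}$ is critical for $\Phi$ iff $\Psi(\bm{Q})$ is critical for $\psi$. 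It therefore suffices to find the critical points of $\psi$ on $SO(3)$ and transport them back via $\bm{Q}=\bm{U}\bm{P}\bm{U}^\top$.

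Parametrizing the tangent space at $\bm{P}\in SO(3)$ by $\bm{P}\xi^\wedge$, $\xi\in\mb{R}^3$, the directional derivative of $\psi$ is $-\mathrm{tr}(\bm{G}\bm{P}\xi^\wedge)$, which vanishes for every $\xi$ precisely when the skew part of $\bm{G}\bm{P}$ is zero, i.e. when $\bm{G}\bm{P}=\bm{P}^\top\bm{G}$ (using $\bm{G}^\top=\bm{G}$). This is the critical-point equation I would analyze.

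The heart of the argument, and the step I expect to be the main obstacle, is to show that this equation together with $\bm{P}\in SO(3)$ and the distinctness of the entries $g_1,g_2,g_3$ of $\bm{G}$ forces $\bm{P}$ into the four listed matrices. The case $\bm{P}=\bm{I}_3$ is immediate. For $\bm{P}\neq\bm{I}_3$, let $\bm{n}$ be its unit rotation axis, so $\bm{P}\bm{n}=\bm{P}^\top\bm{n}=\bm{n}$. Right-multiplying $\bm{G}\bm{P}=\bm{P}^\top\bm{G}$ by $\bm{n}$ gives $\bm{G}\bm{n}=\bm{P}^\top\bm{G}\bm{n}$, hence $\bm{P}(\bm{G}\bm{n})=\bm{G}\bm{n}$; since the fixed subspace of $\bm{P}\neq\bm{I}_3$ is one-dimensional and spanned by $\bm{n}$, the vector $\bm{G}\bm{n}$ must be parallel to $\bm{n}$, so $\bm{n}$ is an eigenvector of $\bm{G}$. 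As $g_1,g_2,g_3$ are distinct, $\bm{n}\in\{\pm[\bm{I}_3]_1,\pm[\bm{I}_3]_2,\pm[\bm{I}_3]_3\}$, i.e. the rotation axis is a coordinate axis. With the axis $[\bm{I}_3]_k$ fixed, $\bm{P}$ acts as a planar rotation by some angle $\theta$ in the complementary coordinate plane; equating the two off-diagonal entries of $\bm{G}\bm{P}$ in that plane yields $(g_i+g_j)\sin\theta=0$, and since $g_i+g_j>0$ I obtain $\sin\theta=0$, so $\theta\in\{0,\pi\}$. The only nontrivial solutions are thus the $\pi$-rotations about the coordinate axes, $\bm{D}_i=2[\bm{I}_3]_i[\bm{I}_3]_i^\top-\bm{I}_3$, leaving the critical set $\{\bm{I}_3,\bm{D}_1,\bm{D}_2,\bm{D}_3\}$.

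Finally I would check that each candidate indeed satisfies $\bm{G}\bm{P}=\bm{P}^\top\bm{G}$, which is immediate since each is diagonal and so $\bm{G}\bm{P}$ is diagonal hence symmetric. The critical set is therefore exactly these four (distinct) matrices; being a finite set, each point is isolated, as claimed. Transporting back through $\bm{Q}=\bm{U}\bm{P}\bm{U}^\top$ and using $\bm{U}\bm{I}_3\bm{U}^\top=\bm{I}_3$ gives the critical points $\bm{Q}\in\{\bm{I}_3,\bm{U}\bm{D}_1\bm{U}^\top,\bm{U}\bm{D}_2\bm{U}^\top,\bm{U}\bm{D}_3\bm{U}^\top\}$, completing the proof.
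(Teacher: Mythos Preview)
Your argument is correct. The reduction via the conjugation diffeomorphism $\bm{P}=\bm{U}^\top\bm{Q}\bm{U}$ is valid, the first-order condition $\bm{G}\bm{P}=\bm{P}^\top\bm{G}$ is derived correctly from the vanishing of $\mathrm{tr}(\bm{G}\bm{P}\xi^\wedge)$ for all $\xi$, and the rotation-axis argument cleanly forces the axis to be a coordinate direction and the angle to be $0$ or $\pi$. The four candidates are then easily verified and are isolated.

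As for comparison with the paper: there is essentially nothing to compare against. The paper does not prove this lemma; it simply quotes it from \cite[Prop.~11.31]{Bullo2005spr} and moves on. Your write-up therefore supplies a self-contained proof where the paper defers to an external reference. The approach you take---reducing to the diagonal case and then exploiting the axis/eigenvector constraint---is the standard one and is in the same spirit as the cited source, so no substantive methodological difference arises.
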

\begin{Lemma}\cite{Quoc2020auto}\label{lm:characteristics_stationary_points}
Consider critical points of $\Phi({\bm{Q}})=\text{tr}(\bm{G}(\bm{I}_3-\bm{U}^\top{\bm{Q}}\bm{U}))$ in Lemma \ref{lm:stationary_points}. The desired critical point ${\bm{Q}}=\bm{I}_3$ is the global minimum of $\Phi$ while, depending on the values of the diagonal entries of $\bm{G}$, one of the three undesired points is the global maximum and the other two critical points are saddle points of $\Phi$. 
\end{Lemma}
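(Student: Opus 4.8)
The plan is to diagonalize the problem and then classify each critical point by the signature of its Hessian. First I would set $\bm{P} := \bm{U}^\top\bm{Q}\bm{U}$; since $\bm{U}\in O(3)$ and $\bm{Q}\in SO(3)$ we have $\det(\bm{P}) = \det(\bm{U})^2\det(\bm{Q}) = 1$, so $\bm{P}\in SO(3)$ and $\Phi(\bm{Q}) = \mathrm{tr}(\bm{G}) - \mathrm{tr}(\bm{G}\bm{P})$. Under this substitution the four critical points of Lemma \ref{lm:stationary_points} become $\bm{P}\in\{\bm{I}_3,\bm{D}_1,\bm{D}_2,\bm{D}_3\}$ with $\bm{D}_i = 2[\bm{I}_3]_i[\bm{I}_3]_i^\top - \bm{I}_3$ the diagonal matrix carrying a single $+1$ in its $i$-th entry and $-1$ elsewhere, so the analysis reduces entirely to diagonal data.

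Writing $\bm{G} = \mathrm{diag}(g_1,g_2,g_3)$, the next step is to evaluate $\Phi$ at the four points: one obtains $\Phi(\bm{I}_3) = 0$ and $\Phi(\bm{D}_i) = 2\sum_{j\neq i}g_j$. Because the $g_j$ are positive, $\bm{I}_3$ realises the strict minimum value $0$, which already identifies $\bm{Q} = \bm{I}_3$ as the global minimum; and among the three undesired points the one indexed by the \emph{smallest} eigenvalue gives the largest value $2\sum_{j\neq i}g_j$, which is the clause ``depending on the values of the diagonal entries of $\bm{G}$'' and pins down the global maximum. What remains is to confirm that this value structure is matched by the correct second-order behaviour and, in particular, that the other two points are genuinely saddles.

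For the classification I would compute the Hessian intrinsically through the exponential map: near a critical point $\bm{P}_0$ set $\bm{P} = \bm{P}_0\exp(\xi^\wedge)$, $\xi\in\mb{R}^3$, and expand $\mathrm{tr}(\bm{G}\bm{P})$ using $\exp(\xi^\wedge) = \bm{I}_3 + \xi^\wedge + \tfrac12(\xi^\wedge)^2 + O(\|\xi\|^3)$ together with $(\xi^\wedge)^2 = \xi\xi^\top - \|\xi\|^2\bm{I}_3$. The first-order term $\mathrm{tr}(\bm{G}\bm{P}_0\xi^\wedge)$ vanishes because $\bm{G}\bm{P}_0$ is diagonal (hence symmetric) while $\xi^\wedge$ is skew, which re-confirms criticality; the quadratic part reduces to $\xi^\top\bm{G}\bm{P}_0\,\xi - \|\xi\|^2\mathrm{tr}(\bm{G}\bm{P}_0)$. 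At $\bm{P}_0 = \bm{I}_3$ the coefficient along the $m$-th direction is $g_m - \sum_l g_l = -\sum_{l\neq m}g_l < 0$, so the form is negative definite for $\mathrm{tr}(\bm{G}\bm{P})$ and hence positive definite for $\Phi$: a strict minimum, as expected.

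Reading the signature at $\bm{P}_0 = \bm{D}_i$ then closes the argument. Here the coefficient along the $i$-th direction is $g_j + g_k > 0$, while the two transverse coefficients are $g_j - g_i$ and $g_k - g_i$, whose signs are governed by where $g_i$ sits in the ordering: if $g_i$ is the smallest both are positive, the form for $\mathrm{tr}(\bm{G}\bm{P})$ is positive definite, and $\Phi$ has a maximum, consistent with the value computation; if $g_i$ is the largest or the middle value, at least one transverse coefficient is negative and another positive, the form is indefinite, and the point is a saddle. The main obstacle is the careful bookkeeping in this last step -- assembling the quadratic form and tracking every sign $g_\bullet - g_i$ against the chosen ordering -- and it is precisely the distinct-eigenvalue hypothesis of Lemma \ref{lm:almost_surely_distinct} that keeps all coefficients nonzero, rendering the four critical points nondegenerate. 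As a shortcut once nondegeneracy is in hand, one may instead invoke Morse theory on $SO(3)\cong\mb{RP}^3$, whose $\mb{Z}_2$ Betti numbers are all $1$, to conclude at once that the four critical points carry indices $0,1,2,3$, i.e.\ one minimum, two saddles and one maximum.
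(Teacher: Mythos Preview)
The paper does not supply its own proof of this lemma; it is quoted from \cite{Quoc2020auto}, so there is nothing in the present paper to compare your argument against.

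That said, your argument is correct and is essentially the standard one. The reduction $\bm{P}=\bm{U}^\top\bm{Q}\bm{U}$ to diagonal data is the right first move, the second-order expansion via $\exp(\xi^\wedge)=\bm{I}_3+\xi^\wedge+\tfrac{1}{2}(\xi^\wedge)^2+O(\|\xi\|^3)$ and $(\xi^\wedge)^2=\xi\xi^\top-\|\xi\|^2\bm{I}_3$ is computed accurately, and the signature bookkeeping at $\bm{P}_0=\bm{D}_i$ (coefficients $g_j+g_k$, $g_j-g_i$, $g_k-g_i$) is exactly what one finds in, e.g., \cite[Prop.~11.31]{Bullo2005spr} or the reference cited here. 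Two small points worth tightening: (i) when you pass from ``$\Phi(\bm{I}_3)=0$ is the smallest critical value'' to ``$\bm{I}_3$ is the global minimum'', you are tacitly using compactness of $SO(3)$ together with Lemma~\ref{lm:stationary_points}'s assertion that these four are the \emph{only} critical points; say so explicitly, and likewise for the global maximum. (ii) In the saddle case where $g_i$ is the largest eigenvalue, both transverse coefficients $g_j-g_i$ and $g_k-g_i$ are negative while $g_j+g_k>0$, so the index is $2$ rather than $1$; your sentence ``at least one transverse coefficient is negative and another positive'' is only literally true in the middle-eigenvalue case, though the conclusion (indefinite, hence saddle) is correct in both. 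The Morse-theoretic shortcut via the $\mb{Z}_2$ Betti numbers of $\mb{RP}^3$ is a nice alternative once nondegeneracy is in hand.
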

Define the following error vectors:
\begin{align*}
\bm{e}_{21}&=\bm{b}_{12}^1\times \bm{b}_{21}^2,~
\bm{e}_{2n_l}=\bm{b}_{1n_l}^1\times\bm{b}_{2n_l}^2,~\forall x_l\in\mc{V}_a\\
\bm{e}_2&=k_{21}\bm{e}_{21}+\sum_{l\in\mc{V}_a}k_{2l}\bm{e}_{2n_l}. \numberthis \label{eq:error_vector}
\end{align*}
Then we have the following lemma whose proof is given in Appendix \ref{app:properties_error_functions12}.
\begin{Lemma}\label{lm:properties_error_functions12} The error function $\Phi_{2}$ and error vectors satisfy the following properties:
\begin{enumerate}[(i)]
\item $\dot{\Phi}_{2}=\bm\omega_2^2\cdot \bm{e}_2$
\item $||\dot{\bm{e}}_{2}||\leq \Big(k_{21}+\sum_{x_l\in\mc{V}_a}k_{2x_l}\Big)\abs{\bm\omega_2^2}^2$.
\item $\sigma_{2}||\bm{e}_2||^2\leq\Phi_{2}\le \gamma_{2}||\bm{e}_2||^2$ for some constants $\sigma_{2}, \gamma_{2}>0$, where the upper bound holds when $\Phi_{2}<2\min \{\lambda_j+\lambda_k\},~(\lambda_j,\lambda_k\in\lambda(\bm{K}_2),j\neq k)$.
\end{enumerate}
\end{Lemma}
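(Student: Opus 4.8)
The three parts are largely independent, so I would treat them in turn, with the real work concentrated in \emph{(iii)}.

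Parts \emph{(i)} and \emph{(ii)} are direct kinematic computations. For \emph{(i)} I would differentiate the inner-product form of $\Phi_2$ in \eqref{eq:error_function12}; since the leader is stationary, $\dot{\bm b}^1_{1k}=\bm 0$, so only the agent-$2$ factors survive, and substituting $\dot{\bm b}_{21}^2=\bm b_{21}^2\times\bm\omega_2^2$ from \eqref{eq:derivative_of_direction} (and the analogous landmark derivatives) and applying the cyclic triple-product identity \eqref{eq:cross_prod_4} to pull $\bm\omega_2^2$ out of each term gives $\dot\Phi_2=\bm\omega_2^2\cdot(k_{21}\bm e_{21}+\sum_l k_{2l}\bm e_{2n_l})=\bm\omega_2^2\cdot\bm e_2$ by \eqref{eq:error_vector}. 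For \emph{(ii)}, differentiating $\bm e_2$ termwise and using $\dot{\bm b}^1_{1k}=\bm 0$ again gives $\dot{\bm e}_{21}=\bm b_{12}^1\times(\bm b_{21}^2\times\bm\omega_2^2)$ and similarly for the landmark terms; bounding each double cross product by $\|\bm a\times\bm b\|\le\|\bm a\|\|\bm b\|$ with unit directional vectors and summing through the triangle inequality with the gains yields the stated bound.

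\emph{(iii)} is the heart of the lemma. The first step is to express both quantities through the relative rotation $\tilde{\bm Q}_2=\bm R_2\bm R_1^\top$ and the matrix $\bm K_2$: \eqref{eq:error_function_rewrite1} already gives $\Phi_2=\mathrm{tr}(\bm K_2(\bm I_3-\tilde{\bm Q}_2))$, and substituting $\bm b_{1\cdot}=-\bm b_{2\cdot}$ into \eqref{eq:error_vector}, then using \eqref{eq:cross_prod_2} together with the dyad identity $(\bm M\bm a\bm a^\top-\bm a\bm a^\top\bm M^\top)^\vee=\bm a\times(\bm M\bm a)$ implied by \eqref{eq:cross_prod_3}, collapses the error vector to $\bm R_2\bm e_2=(\tilde{\bm Q}_2\bm K_2-\bm K_2\tilde{\bm Q}_2^\top)^\vee$. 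Thus $\Phi_2$ and $\|\bm e_2\|$ are smooth on the compact group $SO(3)$, with $\Phi_2\ge0$ vanishing only at $\tilde{\bm Q}_2=\bm I_3$ and $\bm e_2=\bm 0$ exactly at the four critical points of Lemma \ref{lm:stationary_points}. To compare their magnitudes I would linearise about the minimum via $\tilde{\bm Q}_2=\exp(\bm\xi^\wedge)$: using $\mathrm{tr}(\bm K_2\bm\xi^\wedge)=0$ and $(\bm\xi^\wedge)^2=\bm\xi\bm\xi^\top-\|\bm\xi\|^2\bm I_3$ gives $\Phi_2=\tfrac12\bm\xi^\top\bm L\bm\xi+o(\|\bm\xi\|^2)$, while the identity $(\bm\xi^\wedge\bm K_2+\bm K_2\bm\xi^\wedge)^\vee=\bm L\bm\xi$ gives $\bm R_2\bm e_2=\bm L\bm\xi+o(\|\bm\xi\|)$, where $\bm L:=\mathrm{tr}(\bm K_2)\bm I_3-\bm K_2\succ0$ has eigenvalues equal to the pairwise sums $\lambda_j+\lambda_k$. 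Since $\bm L\succ0$, the forms $\bm\xi^\top\bm L\bm\xi$ and $\|\bm L\bm\xi\|^2$ are equivalent with ratio in $[\lambda_{\min}(\bm L),\lambda_{\max}(\bm L)]$, giving both inequalities locally with $\sigma_2\approx1/(2\lambda_{\max}(\bm L))$ and $\gamma_2\approx1/(2\lambda_{\min}(\bm L))$. The global lower bound then follows by compactness of $SO(3)$: $\Phi_2/\|\bm e_2\|^2$ is continuous off the critical set and can tend to zero only as $\tilde{\bm Q}_2\to\bm I_3$, where the local estimate keeps it bounded below, so $\inf_{SO(3)}\Phi_2/\|\bm e_2\|^2=\sigma_2>0$.

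The upper bound is the genuine obstacle, and it is obstructed globally: at each of the three undesired critical points $\bm e_2=\bm0$ while $\Phi_2>0$, so no finite $\gamma_2$ can hold on all of $SO(3)$. This is precisely why the statement restricts to the sublevel set where $\Phi_2<2\min\{\lambda_j+\lambda_k\}$. To close the argument I would evaluate $\Phi_2$ at the undesired points $\tilde{\bm Q}_2=\bm U\bm D_i\bm U^\top$ of Lemma \ref{lm:stationary_points}, obtaining $\Phi_2=2(\mathrm{tr}(\bm G_2)-\lambda_i)\ge2\min_{j\neq k}(\lambda_j+\lambda_k)$, so that all three lie outside this set; hence on it $\bm e_2$ vanishes only at $\tilde{\bm Q}_2=\bm I_3$. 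A compactness argument on a closed sublevel set $\{\Phi_2\le c\}$ with $c<2\min_{j\neq k}(\lambda_j+\lambda_k)$, combined with the local quadratic equivalence at the minimum, then produces a finite $\gamma_2$ and completes \emph{(iii)}.
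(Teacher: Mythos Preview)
Your treatment of parts \emph{(i)} and \emph{(ii)} is essentially identical to the paper's: differentiate the inner-product form, use the leader's stationarity to kill half the terms, and apply the cyclic identity \eqref{eq:cross_prod_4}; for \emph{(ii)} the paper likewise writes $\dot{\bm e}_{21}=\bm b_{12}^1\times\bm b_{21}^2\times\bm\omega_2^2$ and bounds by unit norms. (Both your bound and the paper's actually produce a factor linear in $\|\bm\omega_2^2\|$, so the exponent $2$ in the stated inequality appears to be a typo in the paper rather than something you are failing to reproduce.)

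For part \emph{(iii)} there is nothing to compare against: the paper does not give its own argument but simply refers to \cite[Lem.~3]{Quoc2019autoExtend} and \cite[Prop.~1]{TLee2015tac}. Your route---rewriting $\Phi_2$ and $\bm e_2$ through $\tilde{\bm Q}_2$ and $\bm K_2$, linearising via $\tilde{\bm Q}_2=\exp(\bm\xi^\wedge)$ to obtain the matrix $\bm L=\mathrm{tr}(\bm K_2)\bm I_3-\bm K_2$ with eigenvalues $\lambda_j+\lambda_k$, and then patching the local quadratic equivalence with a compactness argument away from the identity---is in fact the standard mechanism behind those cited results, so your sketch is consistent with what the paper invokes. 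One point worth making explicit in your write-up: the constant $\gamma_2$ you extract by compactness on a \emph{closed} sublevel set $\{\Phi_2\le c\}$ with $c<2\min_{j\neq k}(\lambda_j+\lambda_k)$ necessarily depends on $c$ and will blow up as $c$ approaches the threshold, since one undesired critical point sits exactly on the boundary of the open sublevel set; this is how the bound is actually used later in the paper (e.g.\ in Lemma~\ref{lm:aISS_i}), so stating it that way avoids any ambiguity.
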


\subsubsection{Orientation Alignment Laws} We now design the control law for agent $2$ as follows.
\begin{align}
\dot{\bm{\omega}}_2^2&=-k_\omega\bm{\omega}_2^2-\bm{e}_2,\label{eq:control_law_2}
\end{align}
where $k_\omega>0$ is a constant control gain. Then, the following result is obtained.
\begin{Theorem}\label{thm:stability_of_agent12}
Suppose that Assumptions \ref{ass:direction_measurements}-\ref{ass:non_colocated} hold. Then, under the orientation alignment law \eqref{eq:control_law_2}, for almost all positive scalars in \eqref{eq:error_function12}, we have:
\begin{enumerate}[(i)]\label{thm:orient_agent_3}
\item The invariant set of the system is given as $\{(\tilde{\bm{Q}}_{2},$ $\bm\omega_2^2):\tilde{\bm{Q}}_{2}\in \{\bm{I}_3,\bm{U}\bm{D}_1\bm{U}^\top,\bm{U}\bm{D}_2\bm{U}^\top,$ $\bm{U}\bm{D}_3\bm{U}^\top\},$ $\bm\omega_2^2=\bm{0}\}$, where $\bm{D}_i$ and $\bm{U}$ are defined in Lemma \ref{lm:stationary_points}.
\item The desired equilibrium $(\bm{I}_3,\bm{0})$ is almost globally asymptotically stable and locally exponentially stable and the three undesired equilibria are unstable.
\end{enumerate}
\end{Theorem}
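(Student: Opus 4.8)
The plan is to build a mechanical-energy Lyapunov function, obtain the invariant set via LaSalle, and then read off local stability and instability from the linearization using Lemma \ref{lm:characteristics_stationary_points}. Take
\[
V = \Phi_2 + \tfrac{1}{2}\abs{\bm{\omega}_2^2}^2 .
\]
Since $\Phi_2 \ge 0$ and $SO(3)$ is compact, the sublevel sets of $V$ are compact and trajectories are bounded. Differentiating, using property (i) of Lemma \ref{lm:properties_error_functions12} ($\dot\Phi_2 = \bm{\omega}_2^2\cdot\bm{e}_2$) and the control law \eqref{eq:control_law_2}, the cross terms cancel and I expect
\[
\dot V = \bm{\omega}_2^2\cdot\bm{e}_2 + \bm{\omega}_2^2\cdot(-k_\omega\bm{\omega}_2^2 - \bm{e}_2) = -k_\omega\abs{\bm{\omega}_2^2}^2 \le 0 .
\]
By LaSalle's invariance principle, every trajectory converges to the largest invariant set in $\{\dot V = 0\} = \{\bm{\omega}_2^2 = \bm{0}\}$. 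On this set $\dot{\bm{\omega}}_2^2 = \bm{0}$, so \eqref{eq:control_law_2} forces $\bm{e}_2 = \bm{0}$. Because $\dot\Phi_2 = \bm{\omega}_2^2\cdot\bm{e}_2$ holds for arbitrary $\bm{\omega}_2^2$, the vector $\bm{e}_2$ is the body-frame gradient of $\Phi_2$, so $\bm{e}_2 = \bm{0}$ is exactly the condition that $\tilde{\bm{Q}}_2$ be a critical point of $\Phi_2 = \mathrm{tr}\big(\bm{G}_2(\bm{I}_3 - \bm{U}^\top\tilde{\bm{Q}}_2\bm{U})\big)$. Since $\bm{G}_2$ has distinct positive entries by Lemma \ref{lm:almost_surely_distinct}, Lemma \ref{lm:stationary_points} identifies these critical points as the four listed matrices, giving claim (i).

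For (ii) I would linearize the closed loop about each equilibrium $(\tilde{\bm{Q}}_2^*,\bm{0})$ using the right-trivialized perturbation $\bm{R}_2 = \bm{R}_2^*\exp(\eta^\wedge)$, $\eta\in\mb{R}^3$. To first order $\dot\eta = \bm{\omega}_2^2$ and $\dot{\bm{\omega}}_2^2 = -k_\omega\bm{\omega}_2^2 - \bm{H}\eta$, where $\bm{H}$ is the Jacobian of $\bm{e}_2$ in $\eta$, i.e., the Hessian of $\Phi_2$ at the critical point. The characteristic roots of this $6\times 6$ system decouple as $s^2 + k_\omega s + \mu = 0$ over the eigenvalues $\mu$ of $\bm{H}$. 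At the global minimum $\tilde{\bm{Q}}_2 = \bm{I}_3$, Lemma \ref{lm:characteristics_stationary_points} gives $\bm{H}\succ 0$, so every root pair has negative real part and the equilibrium is locally exponentially stable. At each of the three undesired critical points Lemma \ref{lm:characteristics_stationary_points} guarantees at least one negative eigenvalue of $\bm{H}$, which produces a root with positive real part; by Lyapunov's indirect method these equilibria are unstable.

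Almost global asymptotic stability then follows by combining three facts: LaSalle forces convergence to the four-point equilibrium set; the distinct-eigenvalue hypothesis of Lemma \ref{lm:almost_surely_distinct} makes the critical points nondegenerate, so every $\mu\neq 0$ and each equilibrium is hyperbolic; and $(\bm{I}_3,\bm{0})$ is the unique attractor. Each unstable equilibrium therefore has a stable manifold of dimension at most five in the six-dimensional state space, hence of measure zero, and the complement of the union of these stable manifolds converges to $(\bm{I}_3,\bm{0})$.

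The main obstacle I anticipate is the linearization step: correctly identifying $\bm{H}$ with the Hessian of $\Phi_2$ so that its inertia is supplied by Lemma \ref{lm:characteristics_stationary_points}, and verifying nondegeneracy ($\mu\neq 0$), which is what simultaneously licenses the clean root placement $s^2+k_\omega s+\mu$ and the hyperbolicity needed for the measure-zero stable-manifold argument. The boundedness, $\dot V$ computation, and LaSalle portions are routine by comparison.
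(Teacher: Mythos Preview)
Your proposal is correct and complete in outline, but it follows a genuinely different route from the paper. The paper augments the mechanical energy with a cross term, taking $V_2=\Phi_2+\tfrac{1}{2}\abs{\bm\omega_2^2}^2+k_V\,\bm e_2^\top\bm\omega_2^2$, and shows that for small $k_V$ this is a \emph{strict} Lyapunov function: $\dot V_2$ is negative definite in $(\abs{\bm e_2},\abs{\bm\omega_2^2})$. This yields (i) directly and gives local exponential stability without a separate linearization; instability of the three undesired equilibria is then obtained by Chetaev's theorem applied to $U_2=2(\lambda_j+\lambda_k)-V_2$. Your approach instead keeps the pure energy $V=\Phi_2+\tfrac{1}{2}\abs{\bm\omega_2^2}^2$, accepts a merely semidefinite $\dot V$, recovers the equilibrium set via LaSalle plus the invariance step $\dot{\bm\omega}_2^2=0\Rightarrow\bm e_2=0$, and then settles the stability type of each equilibrium by linearization and the characteristic equation $s^2+k_\omega s+\mu=0$ over the Hessian eigenvalues $\mu$. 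The trade-off: the paper's strict Lyapunov function avoids any Hessian computation and gives exponential decay estimates for free, at the cost of the cross-term bookkeeping and the choice of $k_V$; your route is cleaner for part (i) and conceptually transparent, but pushes all the work into part (ii), where you must verify that the Hessian of $\Phi_2$ is nondegenerate at every critical point so that the equilibria are hyperbolic and the stable-manifold measure-zero argument applies. That nondegeneracy is indeed a standard consequence of the distinct-eigenvalue hypothesis on $\bm G_2$ (the Hessian eigenvalues at the critical points are nonzero sums and differences of the $\lambda_i$), so the obstacle you flag is real but surmountable.
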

\begin{proof} See Appendix \ref{app:stability_of_agent12}.
\end{proof}
\subsection{Agent $i\in \mc{V}\setminus \{1,2\}$}
Due to the cascade structure of the leader-follower system, we will use an induction argument to show the stability of the other agents. For induction, we assume that the corresponding results in Theorem \ref{thm:stability_of_agent12} hold for the agents $2,\ldots,i-1$, i.e., $\bm{R}_j\rightarrow\bm{R}_1$ as $t\rightarrow \infty,$ $\forall j\in\{2,\ldots,i-1\}$. 

Recall that agent $i$ measures the directions $(\bm{b}_{ij}^i,\bm{b}_{ik}^i)$ to its two neighbors, say $\{j,k\}$, and receives $(\bm{b}_{ji}^j,\bm{b}_{ki}^k)$ transmitted from the neighbors. We consider the normalized cross product, $\bm{b}_{in}^i:=(\bm{b}_{ij}^i\times\bm{b}_{ik}^i)/\abs{\bm{b}_{ij}^i\times\bm{b}_{ik}^i}$, as the third direction constraint for the proposed algorithm to work (i.e., for positive definite of $\bm{K}_i$ which will be defined after Eq. \eqref{eq:unforced-system_i} below). In other words, associated with each agent $i=3,\ldots,n$, a virtual neighbor, $n$, at the direction of $\bm{b}_{in}^i$ is considered, i.e., $\mc{N}_i=\{j,k,n\}$. To proceed, similar to the error vectors of agent $2$ in \eqref{eq:error_vector}, we define the error vector associated with each agent $i$ as follows.
\begin{equation}\label{eq:error_vector_i}
\bm{e}_i=\sum_{j\in\mc{N}_i}k_{ij}(\bm{b}_{ji}^j\times \bm{b}_{ij}^i).
\end{equation}
\subsubsection{Proposed alignment law} We propose the orientation alignment law for agent $i$ as
\begin{align*}
\dot{\bm{\omega}}_i^i&=-k_\omega\bm{\omega}_i^i-\bm{e}_i \numberthis\label{eq:control_law_i}\\
&=-k_\omega\bm{\omega}_i^i-\sum_{j\in\mc{N}_i}k_{ij}(\bm{b}_{ji}^1\times \bm{b}_{ij}^i)+\bm{h}_i(t), \numberthis \label{eq:cascade_system_i}
\end{align*}
where $\bm{b}_{ji}^1=\bm{R}_1^\top\bm{b}_{ji}$ denotes the directional vector $\bm{b}_{ji}$ expressed in the leader's coordinate system $^1\Sigma$, $\bm{R}_1$ is the orientation matrix of agent $1$, and 
\begin{equation*}
\bm{h}_i(t):=\sum_{j\in\mc{N}_i}k_{ij}\big((\bm{b}_{ji}^1-\bm{b}_{ji}^j)\times \bm{b}_{ij}^i\big).
\end{equation*} Since the right hand side of \eqref{eq:control_law_i} is linear in $\bm{\omega}_i^i$ and $\bm{e}_i$ is bounded, $\bm{\omega}_i^i$ is bounded and uniformly continuous in $t$.
Note that $\bm{h}_i(t)$ can be considered as an additive input to the following unforced system:
\begin{equation}\label{eq:unforced-system_i}
\dot{\bm{R}}_i=\bm{R}_i(\bm\omega_i^i)^\wedge,~\dot{\bm{\omega}}_i^i=-k_\omega\bm{\omega}_i^i-\bar{\bm{e}}_i,
\end{equation}
where $\bar{\bm{e}}_i:=\textstyle\sum_{j\in\mc{N}_i}k_{ij}(\bm{b}_{ji}^1\times \bm{b}_{ij}^i)$.
 
Let the symmetric matrix $\bm{K}_i=\sum_{j\in\mc{N}_i}k_{ij}\bm{b}_{ij}\bm{b}_{ij}^\top$, which has distinct eigenvalues for almost all $k_{ij}>0$ (Lemma \ref{lm:almost_surely_distinct}), and the error function
\begin{align*}
\Phi_i&=\sum_{j\in\mc{N}_i}k_{ij}(1+\bm{b}_{ji}^1\cdot \bm{b}_{ij}^i)\\
&=\text{tr}(\bm{G}_{i}(\bm{I}_3-\bm{U}^\top\tilde{\bm{Q}}_{i}\bm{U}),
\end{align*}
where $\tilde{\bm{Q}}_{i}:=\bm{R}_i\bm{R}_1^\top$, and $\bm{G}_i:=\text{diag}\{\lambda(\bm{K}_{i})\}$ is the diagonal matrix associated with the decomposition $\bm{K}_i=\bm{U}\bm{G}_i\bm{U}^\top$. Thus, $\Phi_i$ has four isolated critical points (Lemma \ref{lm:stationary_points}) which satisfy the properties in Lemma \ref{lm:characteristics_stationary_points}. Furthermore,
we have the following lemma whose proof is similar to the proof of Lemma \ref{lm:properties_error_functions12}. 
\begin{Lemma}
The error function $\Phi_i$ and error vector $\bar{\bm{e}}_i$ satisfy the following properties:
\begin{enumerate}[(i)]
\item $\dot{\Phi}_i=\bar{\bm{e}}_i\cdot\bm{\omega}_i^i$.
\item $||\dot{\bar{\bm{e}}}_i||\leq \sum_{j\in\mc{N}_i}k_{ij}||\bm{\omega}_i^i||$.
\item $\sigma_{i}||\bar{\bm{e}}_i||^2\leq\Phi_{i}\le \gamma_{i}||\bar{\bm{e}}_i||^2$ for some constants $\sigma_{i}, \gamma_{i}>0$, where the upper bound holds when ${\Phi}_{i}<2\min \{\lambda_j+\lambda_k\},~(\lambda_j,\lambda_k\in\lambda(\bm{K}_{i}),j\neq k)$.
\end{enumerate}
\end{Lemma}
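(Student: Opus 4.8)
The plan is to follow the same three-step route used for Lemma~\ref{lm:properties_error_functions12}, exploiting two structural facts specific to a follower $i\in\mc{V}\setminus\{1,2\}$. First, under the kinematics \eqref{eq:unforced-system_i} the locally measured direction evolves as $\dot{\bm{b}}_{ij}^i=\bm{b}_{ij}^i\times\bm\omega_i^i$, which is exactly \eqref{eq:derivative_of_direction} applied to agent $i$. Second, the reference directions $\bm{b}_{ji}^1=\bm{R}_1^\top\bm{b}_{ji}$ are \emph{constant in time}, because the leader's orientation $\bm{R}_1$ is fixed and all agents are stationary. These two facts make $\Phi_i$ and $\bar{\bm{e}}_i$ depend on time only through $\tilde{\bm{Q}}_i=\bm{R}_i\bm{R}_1^\top$, precisely as in the agent-$2$ case, so the bookkeeping carries over.

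For (i), I would differentiate $\Phi_i=\sum_{j\in\mc{N}_i}k_{ij}(1+\bm{b}_{ji}^1\cdot\bm{b}_{ij}^i)$ term by term; since only $\bm{b}_{ij}^i$ varies, $\dot\Phi_i=\sum_{j}k_{ij}\,\bm{b}_{ji}^1\cdot(\bm{b}_{ij}^i\times\bm\omega_i^i)$, and the scalar triple-product identity \eqref{eq:cross_prod_4} rewrites each summand as $\bm\omega_i^i\cdot(\bm{b}_{ji}^1\times\bm{b}_{ij}^i)$, giving $\dot\Phi_i=\bm\omega_i^i\cdot\bar{\bm{e}}_i$. For (ii), differentiating $\bar{\bm{e}}_i=\sum_{j}k_{ij}(\bm{b}_{ji}^1\times\bm{b}_{ij}^i)$ and using the constancy of $\bm{b}_{ji}^1$ gives $\dot{\bar{\bm{e}}}_i=\sum_{j}k_{ij}\,\bm{b}_{ji}^1\times(\bm{b}_{ij}^i\times\bm\omega_i^i)$; bounding each nested cross product by submultiplicativity and using $\abs{\bm{b}_{ji}^1}=\abs{\bm{b}_{ij}^i}=1$ yields $\abs{\dot{\bar{\bm{e}}}_i}\le\sum_{j}k_{ij}\abs{\bm\omega_i^i}$, which is the claimed (linear) bound.

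The substantive step is (iii), the norm equivalence between $\Phi_i$ and $\abs{\bar{\bm{e}}_i}^2$. Here I would first recast the error vector in matrix form. Using $\bm{b}_{ij}=-\bm{b}_{ji}$, the relation $\bm{R}_i^\top\bm{b}_{ji}=\bm{R}_1^\top\tilde{\bm{Q}}_i^\top\bm{b}_{ji}$, and identities \eqref{eq:cross_prod_2}--\eqref{eq:cross_prod_3}, one obtains $\bar{\bm{e}}_i=\bm{R}_1^\top(\bm{K}_i\tilde{\bm{Q}}_i-\tilde{\bm{Q}}_i^\top\bm{K}_i)^\vee$. Because $\bm{R}_1^\top$ is orthogonal and the map $\bm{M}^\vee\mapsto(\bm{U}^\top\bm{M}\bm{U})^\vee$ preserves the Euclidean norm, substituting $\bm{K}_i=\bm{U}\bm{G}_i\bm{U}^\top$ and setting $\bm{P}:=\bm{U}^\top\tilde{\bm{Q}}_i\bm{U}\in SO(3)$ reduces the claim to $\sigma_i\,\abs{(\bm{G}_i\bm{P}-\bm{P}^\top\bm{G}_i)^\vee}^2\le\text{tr}\big(\bm{G}_i(\bm{I}_3-\bm{P})\big)\le\gamma_i\,\abs{(\bm{G}_i\bm{P}-\bm{P}^\top\bm{G}_i)^\vee}^2$ for a diagonal $\bm{G}_i$ with distinct positive entries $\lambda_1,\lambda_2,\lambda_3$. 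Parametrizing $\bm{P}=\exp(\theta\bm{a}^\wedge)$ by axis--angle turns both sides into explicit trigonometric functions of $(\theta,\bm{a})$ via the diagonal and off-diagonal entries of $\bm{P}$, and the two inequalities follow by direct comparison.

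I expect the upper bound in (iii) to be the main obstacle. The lower bound holds on all of $SO(3)$: the only point where $\Phi_i$ vanishes is the desired one, where it vanishes quadratically in the rotation angle exactly as $\abs{\bar{\bm{e}}_i}^2$ does, so by compactness the quotient $\Phi_i/\abs{\bar{\bm{e}}_i}^2$ stays bounded below by some $\sigma_i>0$. The upper bound, however, fails near the three undesired critical points of Lemma~\ref{lm:stationary_points}, where $\bar{\bm{e}}_i=\bm{0}$ but $\Phi_i$ remains strictly positive, so $\Phi_i/\abs{\bar{\bm{e}}_i}^2$ blows up. Evaluating $\Phi_i$ at $\bm{P}=\bm{D}_m$ gives the critical value $2(\lambda_j+\lambda_k)$ with $\{j,k\}=\{1,2,3\}\setminus\{m\}$, so restricting to the sublevel set $\Phi_i<2\min\{\lambda_j+\lambda_k\}$ excludes all undesired critical points; on this set $\abs{\bar{\bm{e}}_i}$ is bounded away from zero except near the desired point, where the Hessian comparison keeps the quotient finite, yielding a finite $\gamma_i$. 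This is where the assumption of \emph{distinct} eigenvalues of $\bm{K}_i$ (Lemma~\ref{lm:almost_surely_distinct}) is essential, since it isolates the critical points and the associated thresholds. As the computation is structurally identical to that for $\Phi_2$, I would carry out the estimate once and then invoke it with $(\bm{K}_2,\tilde{\bm{Q}}_2)$ replaced by $(\bm{K}_i,\tilde{\bm{Q}}_i)$.
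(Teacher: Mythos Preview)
Your proposal is correct and follows essentially the same route as the paper: the paper simply states that the proof ``is similar to the proof of Lemma~\ref{lm:properties_error_functions12}'' and gives no further detail, and your argument is precisely a careful replay of that lemma with $(\bm{K}_2,\tilde{\bm{Q}}_2,\bm\omega_2^2)$ replaced by $(\bm{K}_i,\tilde{\bm{Q}}_i,\bm\omega_i^i)$, exploiting that $\bm{b}_{ji}^1$ is constant. For part (iii) you actually supply more than the paper does---the paper defers to \cite{Quoc2019autoExtend,TLee2015tac} and omits the computation---but your matrix recasting and sublevel-set argument are the standard ones used in those references.
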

\subsubsection{Stability Analysis} We have the following theorem whose proof is similar to Proof of Theorem \ref{thm:stability_of_agent12}.
\begin{Theorem}\label{thm:unforced_system_i}
Suppose that Assumptions \ref{ass:direction_measurements}-\ref{ass:non_colocated} hold. Then, under the orientation alignment law \eqref{eq:control_law_i}, for almost all positive scalars $k_{ij}>0$, there holds:
\begin{enumerate}[(i)]
\item The invariant set of the unforced system \eqref{eq:unforced-system_i} is given as $\{(\tilde{\bm{Q}}_{i},\bm\omega_i^i):\tilde{\bm{Q}}_{2}\in \{\bm{I}_3,\bm{U}\bm{D}_1\bm{U}^\top,\bm{U}\bm{D}_2\bm{U}^\top,$ $\bm{U}\bm{D}_3\bm{U}^\top\},$ $\bm\omega_i^i=\bm{0}\}$, where $\bm{D}_i$ are defined in Lemma \ref{lm:stationary_points}.
\item The desired equilibrium $(\bm{I}_3,\bm{0})$ is almost globally asymptotically stable and locally exponentially stable and the three undesired equilibria are unstable.
\end{enumerate}
\end{Theorem}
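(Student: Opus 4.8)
The plan is to split the argument into two stages: first I would establish all claims for the \emph{unforced} (nominal) system \eqref{eq:unforced-system_i}, which is structurally identical to the system treated in Theorem \ref{thm:stability_of_agent12}, and then recover the conclusions for the true closed loop \eqref{eq:cascade_system_i} by regarding it as a cascade in which the interconnection term $\bm{h}_i(t)$ is a vanishing perturbation driven by the already-converging upstream agents $2,\ldots,i-1$.

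For the nominal system I would use the Lyapunov candidate $V_i=\Phi_i+\tfrac{1}{2}\abs{\bm{\omega}_i^i}^2$, with $\Phi_i$ the error function introduced above \eqref{eq:unforced-system_i}. Invoking property (i) of the preceding lemma, $\dot{\Phi}_i=\bar{\bm{e}}_i\cdot\bm{\omega}_i^i$, together with $\dot{\bm{\omega}}_i^i=-k_\omega\bm{\omega}_i^i-\bar{\bm{e}}_i$, the indefinite cross terms cancel and
\begin{equation*}
\dot{V}_i=\bar{\bm{e}}_i\cdot\bm{\omega}_i^i+\bm{\omega}_i^i\cdot(-k_\omega\bm{\omega}_i^i-\bar{\bm{e}}_i)=-k_\omega\abs{\bm{\omega}_i^i}^2\le 0.
\end{equation*}
By LaSalle's invariance principle the trajectories converge to the largest invariant set inside $\{\bm{\omega}_i^i=\bm{0}\}$; there $\dot{\bm{\omega}}_i^i=\bm{0}$ forces $\bar{\bm{e}}_i=\bm{0}$, i.e.\ $\tilde{\bm{Q}}_i$ is a critical point of $\Phi_i$. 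Since $\bm{K}_i$ has distinct positive eigenvalues for almost all $k_{ij}>0$, Lemma \ref{lm:stationary_points} pins these down to the four isolated points, giving part (i). Lemma \ref{lm:characteristics_stationary_points} then classifies $(\bm{I}_3,\bm{0})$ as the unique minimizer and the remaining three as a maximum and two saddles; together with the quadratic bounds (property (iii)) this yields asymptotic stability of $(\bm{I}_3,\bm{0})$, while the instability of the undesired points follows by exhibiting a descent direction along the unstable eigenspace of the linearization, exactly as in the proof of Theorem \ref{thm:stability_of_agent12}. Local exponential stability is obtained by linearizing \eqref{eq:unforced-system_i} at $(\bm{I}_3,\bm{0})$ and checking the resulting matrix is Hurwitz.

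It remains to transfer these facts to \eqref{eq:cascade_system_i}. By the induction hypothesis $\bm{R}_j\to\bm{R}_1$ for the neighbors of agent $i$, so each difference $\bm{b}_{ji}^1-\bm{b}_{ji}^j=(\bm{R}_1^\top-\bm{R}_j^\top)\bm{b}_{ji}$ vanishes and hence $\bm{h}_i(t)\to\bm{0}$. Along \eqref{eq:cascade_system_i} the same computation now gives
\begin{equation*}
\dot{V}_i=-k_\omega\abs{\bm{\omega}_i^i}^2+\bm{\omega}_i^i\cdot\bm{h}_i(t).
\end{equation*}
Since $\bm{\omega}_i^i$ is bounded and uniformly continuous (as noted below \eqref{eq:control_law_i}) and the upstream convergence is eventually exponential by induction, $\bm{h}_i\in L_1\cap L_\infty$; integrating then shows $V_i$ is bounded and $\bm{\omega}_i^i\in L_2$, so Barbalat's lemma yields $\bm{\omega}_i^i\to\bm{0}$ and, using uniform continuity of $\dot{\bm{\omega}}_i^i$, $\bar{\bm{e}}_i\to\bm{0}$. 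Thus $\tilde{\bm{Q}}_i$ approaches the critical-point set, the desired point stays locally exponentially stable under the vanishing perturbation, and the undesired points stay unstable.

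I expect the principal obstacle to be the \emph{almost global} part of the cascade claim: local asymptotic stability under a vanishing perturbation is routine, but excluding convergence to the measure-zero stable manifolds of the three undesired equilibria needs more than Barbalat. I would close this by a reduction/total-stability argument for cascades — exploiting that the nominal downstream system is almost globally asymptotically stable with a locally exponentially stable attractor, and that the perturbation stems from an almost globally convergent upstream subsystem — so that the set of initial data attracted to the undesired equilibria remains of measure zero, paralleling the corresponding step in the proof of Theorem \ref{thm:stability_of_agent12}.
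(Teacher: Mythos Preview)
Your nominal-system argument is correct and reaches the same conclusions as the paper, but by a somewhat different route. The paper (whose proof here is declared ``similar to the proof of Theorem~\ref{thm:stability_of_agent12}'') uses the cross-term Lyapunov function $V_i=\Phi_i+\tfrac{1}{2}\abs{\bm{\omega}_i^i}^2+k_V\,\bar{\bm{e}}_i^\top\bm{\omega}_i^i$, which for small $k_V$ yields a \emph{strictly} negative-definite $\dot{V}_i$ in $(\abs{\bar{\bm{e}}_i},\abs{\bm{\omega}_i^i})$; this strict decrease is precisely what makes the Chetaev instability argument for the undesired equilibria go through and also delivers local exponential stability via the quadratic sandwich bounds. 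Your simpler candidate $V_i=\Phi_i+\tfrac{1}{2}\abs{\bm{\omega}_i^i}^2$ gives only $\dot{V}_i=-k_\omega\abs{\bm{\omega}_i^i}^2\le 0$, so you correctly fall back on LaSalle plus invariance for part~(i); note, however, that Chetaev ``exactly as in the proof of Theorem~\ref{thm:stability_of_agent12}'' would \emph{not} apply verbatim with your $V_i$, since $\dot U=-\dot V_i$ vanishes on $\{\bm{\omega}_i^i=\bm{0}\}$. Your alternative---instability and local exponential stability via linearization at the critical points---is a legitimate substitute and arguably more elementary, but it is not what the paper does, so you should drop the phrase ``exactly as in the proof of Theorem~\ref{thm:stability_of_agent12}.''

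One scoping remark: the cascade analysis in your last two paragraphs is not part of this theorem. The statement concerns only the unforced system \eqref{eq:unforced-system_i}; the paper treats the perturbed closed loop \eqref{eq:cascade_system_i} separately afterward, via an almost-global input-to-state stability argument (Lemma~\ref{lm:aISS_i}) combined with the vanishing of $\bm{h}_i(t)$ (Lemma~\ref{lm:vanishing_input_i}). Your Barbalat-based sketch is in the right spirit but belongs to that later step, not here.
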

\begin{Lemma}\label{lm:vanishing_input_i}
The input $\bm{h}_i(t)$ in \eqref{eq:cascade_system_i} is bounded and vanishes asymptotically as $t\rightarrow\infty$.
\end{Lemma}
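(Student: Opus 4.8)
The plan is to establish the two assertions — uniform boundedness and asymptotic vanishing — separately, relying on the induction hypothesis that $\bm{R}_j\to\bm{R}_1$ for every neighbor $j\in\mc{N}_i\cap\{1,\ldots,i-1\}$ and on the fact that every directional vector entering $\bm{h}_i$ has unit norm.

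For boundedness, I would rewrite each summand by substituting $\bm{b}_{ji}^1=\bm{R}_1^\top\bm{b}_{ji}$ and $\bm{b}_{ji}^j=\bm{R}_j^\top\bm{b}_{ji}$, giving $\bm{b}_{ji}^1-\bm{b}_{ji}^j=(\bm{R}_1^\top-\bm{R}_j^\top)\bm{b}_{ji}$. Since $\bm{b}_{ji}$ and $\bm{b}_{ij}^i$ are unit vectors and $\bm{R}_1,\bm{R}_j\in SO(3)$ are isometries, one has $\|\bm{b}_{ji}^1-\bm{b}_{ji}^j\|\le 2$ and hence $\|(\bm{b}_{ji}^1-\bm{b}_{ji}^j)\times\bm{b}_{ij}^i\|\le 2$; the triangle inequality then yields $\|\bm{h}_i(t)\|\le 2\sum_{j\in\mc{N}_i}k_{ij}$, uniformly in $t$.

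For the vanishing claim, I would split the sum according to the type of neighbor. If the leader is a neighbor ($1\in\mc{N}_i$), then for $j=1$ the frame $^1\Sigma$ and the reference frame coincide ($\bm{R}_j=\bm{R}_1$), so $\bm{b}_{1i}^1-\bm{b}_{1i}^1=\bm{0}$ and that summand vanishes identically. For a follower neighbor $j\in\{2,\ldots,i-1\}$, the induction hypothesis gives $\bm{R}_1^\top-\bm{R}_j^\top\to\bm{0}$, so $\bm{b}_{ji}^1-\bm{b}_{ji}^j\to\bm{0}$ and that summand tends to $\bm{0}$. For the virtual neighbor $n$, the two relevant directions $\bm{b}_{ni}^1$ and $\bm{b}_{ni}^n$ are obtained as normalized cross products of the reference pair $\{\bm{b}_{ji}^1,\bm{b}_{ki}^1\}$ and of the transmitted pair $\{\bm{b}_{ji}^j,\bm{b}_{ki}^k\}$, respectively; since $\bm{b}_{ji}^j\to\bm{b}_{ji}^1$ and $\bm{b}_{ki}^k\to\bm{b}_{ki}^1$, and the limiting pair is linearly independent by Assumption \ref{ass:non_colocated}, continuity of the normalized cross product away from its degenerate set gives $\bm{b}_{ni}^n\to\bm{b}_{ni}^1$. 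Consequently every summand converges to $\bm{0}$, so $\bm{h}_i(t)\to\bm{0}$ as $t\to\infty$.

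The main obstacle is precisely the virtual-neighbor term: in contrast to a real neighbor, whose contribution is \emph{linear} in the orientation error $\bm{R}_1^\top-\bm{R}_j^\top$, the virtual direction passes through a normalized cross product, which is only continuous where its two arguments are not parallel. The crux is therefore to invoke Assumption \ref{ass:non_colocated} — the non-collinearity of agent $i$ with its two neighbors — to guarantee that the limiting arguments $\bm{b}_{ji}^1,\bm{b}_{ki}^1$ are linearly independent, so that for all sufficiently large $t$ the construction is evaluated on a neighborhood where it is smooth; the asymptotic conclusion then follows from continuity even though the normalization could in principle degenerate during an early transient.
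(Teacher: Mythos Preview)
Your argument is correct and follows the same route as the paper: boundedness from unit vectors and isometry of rotations, and vanishing from the induction hypothesis $\bm{R}_j\to\bm{R}_1$ applied termwise. In fact you supply more care than the paper's own proof, which dispatches the lemma in two lines and does not explicitly address the virtual-neighbor summand that you handle via continuity of the normalized cross product together with Assumption~\ref{ass:non_colocated}.
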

\begin{proof}
The input $\bm{h}_i(t)$ is clearly bounded. Furthermore, because $\bm{R}_j,j=1,\ldots,i-1$ tend to $\bm{R}_1$ as $t\rightarrow\infty$ we have that $(\bm{b}_{ji}^1-\bm{b}_{ji}^j)\rightarrow\bm{0}$, for all $j\in \mc{N}_i$. This completes the proof.
\end{proof}
\begin{Lemma}\label{lm:aISS_i}
The cascade system \eqref{eq:cascade_system_i} is almost globally input-to-state stable with respect to the input $\bm{h}_i(t)$ when $\lambda_{min}(\bm{K}_i)\approx \lambda_{max}(\bm{K}_i)$ and $k_{ij}$ are sufficiently large, with respect to $\bm{h}_i(t)$.
\end{Lemma}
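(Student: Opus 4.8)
The plan is to produce a strict ISS-Lyapunov function for the cascade \eqref{eq:cascade_system_i} by adding a cross term to the energy function that certifies stability of the unforced system \eqref{eq:unforced-system_i}. Concretely, I would take
\[
V_i=\Phi_i+\tfrac12\abs{\bm\omega_i^i}^2+\epsilon\,\bar{\bm e}_i\cdot\bm\omega_i^i,\qquad 0<\epsilon\ll1 .
\]
Because the input enters only the $\bm\omega_i^i$-dynamics and $\Phi_i$ is a function of the configuration $\tilde{\bm Q}_i=\bm R_i\bm R_1^\top$ alone, property (i) of the preceding lemma, $\dot\Phi_i=\bar{\bm e}_i\cdot\bm\omega_i^i$, still holds for \eqref{eq:cascade_system_i}. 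Differentiating only $\Phi_i+\tfrac12\abs{\bm\omega_i^i}^2$ would give $-k_\omega\abs{\bm\omega_i^i}^2+\bm\omega_i^i\cdot\bm h_i$, which is merely negative semidefinite in the full state (it detects no decay of the configuration error when $\bm\omega_i^i=\bm 0$); the cross term is what injects the missing $-\epsilon\abs{\bar{\bm e}_i}^2$ and makes $V_i$ strict. For $\epsilon$ small, $V_i$ is positive definite about the desired equilibrium by Young's inequality.

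Next I would differentiate $V_i$ along \eqref{eq:cascade_system_i}. Using $\dot{\bm\omega}_i^i=-k_\omega\bm\omega_i^i-\bar{\bm e}_i+\bm h_i$, property (ii) $\abs{\dot{\bar{\bm e}}_i}\le(\sum_{j\in\mc N_i}k_{ij})\abs{\bm\omega_i^i}$, and Young's inequality on the sign-indefinite products, the two copies of $\bar{\bm e}_i\cdot\bm\omega_i^i$ cancel and one is left with
\[
\dot V_i\le -c_1\abs{\bm\omega_i^i}^2-c_2\abs{\bar{\bm e}_i}^2+c_3\abs{\bm h_i}^2 ,
\]
for constants $c_1,c_2,c_3>0$ provided $\epsilon<k_\omega/\sum_{j\in\mc N_i}k_{ij}$. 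Restricting to the sublevel set $\Omega_c=\{\Phi_i<2\min\{\lambda_j+\lambda_k\}\}$ on which the upper bound in property (iii) is valid, I would use $\abs{\bar{\bm e}_i}^2\ge\Phi_i/\gamma_i$ to convert the $-c_2\abs{\bar{\bm e}_i}^2$ term into configuration dissipation, obtaining an ISS-Lyapunov inequality $\dot V_i\le-\alpha(V_i)+\sigma(\abs{\bm h_i})$ with $\alpha,\sigma\in\mc K$. On $\Omega_c$ this yields local ISS (and local exponential ISS near $(\bm I_3,\bm 0)$, consistent with the local exponential stability in Theorem \ref{thm:unforced_system_i}).

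The hard part will be upgrading local ISS on $\Omega_c$ to the claimed \emph{almost global} ISS, and this is exactly where the hypotheses $\lambda_{\min}(\bm K_i)\approx\lambda_{\max}(\bm K_i)$ and ``$k_{ij}$ sufficiently large'' enter. Outside $\Omega_c$ the bound $\Phi_i\le\gamma_i\abs{\bar{\bm e}_i}^2$ fails: at the three undesired configurations $\bm U\bm D_m\bm U^\top$ one has $\bar{\bm e}_i=\bm 0$ while $\Phi_i>0$, so the dissipation term $-c_2\abs{\bar{\bm e}_i}^2$ provides no decay of the configuration error there, and a persistent input could in principle pin a trajectory near an undesired equilibrium --- the usual topological obstruction. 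I would argue instead that each undesired equilibrium is a hyperbolic, unstable equilibrium of \eqref{eq:unforced-system_i} (Theorem \ref{thm:unforced_system_i}) whose stable manifold has codimension at least one and hence measure zero, so for almost every initial condition the unforced flow escapes every fixed neighborhood of these points. Choosing $k_{ij}$ large scales the restoring field $\bar{\bm e}_i$ up relative to the bounded, asymptotically vanishing input $\bm h_i$ (Lemma \ref{lm:vanishing_input_i}), so this escape persists under the perturbation, while $\lambda_{\min}(\bm K_i)\approx\lambda_{\max}(\bm K_i)$ keeps the condition number $\gamma_i/\sigma_i$ in property (iii) near one and enlarges the robustly forward-invariant set $\Omega_c$ by maximizing the threshold $2\min\{\lambda_j+\lambda_k\}$ relative to the spread of $\lambda(\bm K_i)$. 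Combining the measure-zero escape argument with forward invariance of $\Omega_c$ under bounded input then shows that, from almost all initial conditions, trajectories enter $\Omega_c$ and thereafter obey the ISS estimate. The delicate step to make fully rigorous is this invariance/escape analysis under a nonvanishing input on the compact state space $SO(3)\times\mb R^3$; I would handle it via a reduction/converse-Lyapunov argument rather than by a single global estimate.
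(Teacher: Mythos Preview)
Your Lyapunov construction $V_i=\Phi_i+\tfrac12\abs{\bm\omega_i^i}^2+\epsilon\,\bar{\bm e}_i\cdot\bm\omega_i^i$ and the resulting dissipation inequality on the sublevel set $\Omega_c=\{\Phi_i<\phi_i\}$ match the paper's proof essentially line for line; the paper bounds the input contribution as $\delta_k\abs{\bm h_i}$ with $\delta_k=\sup_t\abs{\bm\omega_i^i+k_V\bar{\bm e}_i}$ (using the a~priori boundedness of $\bm\omega_i^i$) rather than via Young's inequality, but this is cosmetic.

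The divergence is in how you pass from local ISS on $\Omega_c$ to \emph{almost global} ISS. You propose a two-pronged program: enlarge $\Omega_c$ via the eigenvalue hypothesis, \emph{and} run an escape-from-unstable-equilibria argument so that trajectories starting outside $\Omega_c$ are eventually funneled in. The paper's route is shorter and avoids the escape step entirely: it observes that when $\lambda_{\min}(\bm K_i)\approx\lambda_{\max}(\bm K_i)$ the threshold $\phi_i=2\min\{\lambda_j+\lambda_k\}$ coincides with $\max_{SO(3)}\Phi_i$, so $\Omega_c$ already covers all of $SO(3)$ except a measure-zero set; large $k_{ij}$ are then used only to make the accompanying constraint on $\abs{\bm\omega_i^i(0)}$ vacuous. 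Thus almost every initial condition is \emph{already} inside the region where your ISS estimate applies, and no perturbation/escape analysis is needed. Your escape argument would be the natural route if one did not have the eigenvalue hypothesis, but here it is an unnecessary detour, and the step you flag as ``delicate'' is precisely what the paper's hypotheses are designed to sidestep.

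One of your supporting heuristics is also off: you claim large $k_{ij}$ make the restoring field $\bar{\bm e}_i$ dominate the disturbance $\bm h_i$, but $\bm h_i=\sum_{j\in\mc N_i}k_{ij}\big((\bm b_{ji}^1-\bm b_{ji}^j)\times\bm b_{ij}^i\big)$ scales with $k_{ij}$ exactly as $\bar{\bm e}_i$ does, so their ratio is unchanged. In the paper, large $k_{ij}$ serve only to enlarge the admissible set of initial angular velocities, not to beat the input.
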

\begin{proof}
Consider the Lyapunov function:
\begin{equation}
V_i=\Phi_i+\frac{1}{2}||\bm{\omega}_i^i||^2+k_V\bm{\omega}_i^i\cdot\bar{\bm{e}}_i.
\end{equation}
Let $\bm{z}_i=[\bar{\bm{e}}_i,||\bm{\omega}_i^i||]^\top$; we then have
\begin{equation}\label{eq:psd_V_i}
\frac{1}{2}\bm{z}_i^\top\left[\begin{matrix}
2\sigma_{i} &-k_V\\
-k_V &1
\end{matrix}\right]\bm{z}_i\leq V_i \leq\frac{1}{2}\bm{z}_i^\top\left[\begin{matrix}
2\gamma_{i} &k_V\\
k_V &1
\end{matrix}\right]\bm{z}_i.
\end{equation}
It is noted that $V_i$ is positive definite for a sufficient small $k_V$. The derivative of $V_i$ along the trajectory of \eqref{eq:cascade_system_i} is given as
\begin{align*}
\dot{\bm{V}}_i&=-k_\omega||\bm{\omega}_i^i||^2+\bm{\omega}_i^i\cdot\bm{h}_i+k_V\bar{\bm{e}}_i^\top(-k_\omega\bm{\omega}_i^i-\bar{\bm{e}}_i+\bm{h}_i)\\
&\quad+k_V\bm{\omega}_i^i\cdot\dot{\bar{\bm{e}}}_i\\
&\leq-(k_\omega-k_V\sum_{j\in\mc{N}_i}k_{ij})||\bm{\omega}_i^i||^2-k_V||\bar{\bm{e}}_i||^2\\
&\quad-k_Vk_\omega\bar{\bm{e}}_i^\top\bm{\omega}_i^i+||\bm{\omega}_i^i+k_V\bar{\bm{e}}_i||||\bm{h}_i||\\
&\leq -\frac{1}{2}\bm{z}_i^\top\bm{P}_k\bm{z}_i+\delta_k||\bm{h}_i||
\end{align*}
where $\delta_k:=\sup_{t}||\bm{\omega}_i^i+k_V\bar{\bm{e}}_i||$, which is finite due to boundedness of $\bm{\omega}_i^i$ and $\bar{\bm{e}}_i$, and
$$
\bm{P}_k=
\begin{bmatrix}
2k_V &k_Vk_\omega\\
k_Vk_\omega &2(k_\omega-k_V\sum_{j\in\mc{N}_i}k_{ij})
\end{bmatrix},
$$
which is positive definite when $k_V<\frac{4k_\omega}{4\sum_{j\in\mc{N}_i}k_{ij}+k_{\omega}^2}$. Let $\bm{M}_k$ and $\bm{N}_k$ respectively be the matrices in the left hand side and right hand side of \eqref{eq:psd_V_i}. Then, we obtain
\begin{equation}\label{eq:local_ISS_i}
\dot{V}_i\leq-\frac{\lambda_{min}(\bm{P}_k)}{\lambda_{max}(\bm{N}_k)} V_i+\delta_k||\bm{h}_i||
\end{equation}
which shows boundedness property of \eqref{eq:cascade_system_i} \cite[Prop. 3]{Angeli2011tac}. Thus, the cascade system is (locally) input-to-state stable wrt. $\bm{h}_i(t)$. 

When $\Phi_i<\phi_i:=2\min \{\lambda_1+\lambda_2,\lambda_1+\lambda_3,\lambda_2+\lambda_3\}$, following \eqref{eq:psd_V_i} we have that
\begin{align*}
\Phi_i&\leq \gamma_{i}||\bm{e}_i||^2\leq \gamma_i||\bm{z}_i||^2\\
&\leq \frac{2\gamma_i}{\lambda_{min}(\bm{M}_i)}V_i\\
&\leq\frac{2\gamma_i\lambda_{max}(\bm{N}_k)}{\lambda_{min}(\bm{M}_i)}||\bm{z}_i||^2.
\end{align*}
Let $\kappa_i:=\frac{2\gamma_i\lambda_{max}(\bm{N}_k)}{\lambda_{min}(\bm{M}_i)}$. It follows that if we initially have $||\bm{z}_i(0)||^2<\phi_i/\kappa_i$ then the condition $\Phi_i<\phi_i$ is satisfied. In addition, if $k_{ij}$ are selected such that $\lambda_{min}(\bm{K}_i)\approx \lambda_{max}(\bm{K}_i)$, then $\tilde{\bm{Q}}_i(0)$ that satisfies $\Phi_i(\tilde{\bm{Q}}_i(0))<\phi_i$ covers almost all $SO(3)$, and $\bm{\omega}_i^i(0)$ satisfies $||\bm{\omega}_i^i(0)||^2<\phi_i/\kappa_i-||\bar{\bm{e}}_i||^2<\phi_i/\kappa_i-\Phi_i(0)/\gamma_i$ cover $\bm{R}^3$ when $k_{ij}\rightarrow \infty,\forall j\in \mc{N}_i$. This completes the proof.
\end{proof}
\begin{Remark}
Agent $i$ can locally design $\lambda(\bm{K}_i)$ because $\bm{K}_i$ is similar to the matrix $$\bm{K}_i^i:=\sum_{j\in\mc{N}_i}k_{ij}\bm{b}_{ij}^i(\bm{b}_{ij}^i)^\top.$$
Note that the third direction in the set $\{\bm{b}_{ij}\},j\in\mc{N}_i:=\{k,l,m\}$ is orthogonal to the first two directional vectors (due to its construction). It follows that $k_{im}$ is an eigenvalue of $\bm{K}_i^i$ corresponding to the eigenvector $\bm{b}_{im}^i$ because
\begin{equation*}
\bm{K}_i^i\bm{b}_{im}^i=\sum_{j\in\mc{N}_i}k_{ij}\bm{b}_{ij}^i(\bm{b}_{ij}^i)^\top\bm{b}_{im}^i=k_{im}\bm{b}_{im}^i.
\end{equation*}
The other two eigenvalues can be obtained by inspecting the roots of the characteristic equation $\text{det}(\lambda\bm{I}_3-\bm{K}_i^i)=(\lambda-k_{im})f(\lambda)$, where $f(\lambda)$ is a quadratic function. Thus, agent $i$ can easily choose $k_{ik}$ and $k_{il}$ such that two roots of $f(\lambda)=0$ arbitrary close to $\lambda=k_{im}$.
\end{Remark}
\begin{Theorem}
Suppose that Assumptions \ref{ass:direction_measurements} and \ref{ass:non_colocated} hold. Then, under the orientation alignment law \eqref{eq:control_law_i}, $\bm{R}_i\rightarrow\bm{R}_1$ almost globally and asymptotically as time goes to infinity.
\end{Theorem}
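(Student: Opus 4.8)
The plan is to proceed by induction on the agent index $i$, exploiting the acyclic leader-follower ordering so that every neighbour of agent $i$ carries a strictly smaller index. The base case $i=2$ is precisely Theorem~\ref{thm:stability_of_agent12}, which already yields $\bm{R}_2\to\bm{R}_1$ almost globally and asymptotically. For the inductive step I would assume the conclusion of Theorem~\ref{thm:stability_of_agent12} for agents $2,\ldots,i-1$, i.e. $\bm{R}_j\to\bm{R}_1$ as $t\to\infty$ for every such $j$, and establish the same convergence for agent $i$.

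First I would invoke Definition~\ref{def:leader_follower}: every neighbour $j\in\mc{N}_i$ satisfies $j\le i-1$ (the virtual third direction being formed from these same neighbours), so the induction hypothesis covers all of them. This immediately feeds Lemma~\ref{lm:vanishing_input_i}, giving that the perturbation $\bm{h}_i(t)$ appearing in the cascade form \eqref{eq:cascade_system_i} is bounded and vanishes as $t\to\infty$, since each difference $\bm{b}_{ji}^1-\bm{b}_{ji}^j\to\bm{0}$.

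Next I would read \eqref{eq:cascade_system_i} as the unforced system \eqref{eq:unforced-system_i} driven by the vanishing input $\bm{h}_i(t)$. Theorem~\ref{thm:unforced_system_i} tells us that the zero-input dynamics has $(\bm{I}_3,\bm{0})$ as an almost globally asymptotically stable equilibrium while the three undesired equilibria are unstable, and Lemma~\ref{lm:aISS_i} establishes that the cascade is almost globally input-to-state stable with respect to $\bm{h}_i$ under the design conditions $\lambda_{min}(\bm{K}_i)\approx\lambda_{max}(\bm{K}_i)$ with sufficiently large gains. Combining these through the almost-ISS / converging-input--converging-state mechanism of \cite{Angeli2011tac} — an almost-ISS system fed an input that decays to zero has almost every trajectory converging to the equilibrium of its zero-input dynamics — I conclude that for almost all initial conditions of agent $i$ one has $\tilde{\bm{Q}}_i=\bm{R}_i\bm{R}_1^\top\to\bm{I}_3$, i.e. $\bm{R}_i\to\bm{R}_1$, closing the induction.

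The hard part will be the bookkeeping of the word \emph{almost} across the cascade. Almost-global stability on $SO(3)$ is unavoidable because of the topological obstruction, so each agent contributes a measure-zero set of bad initial conditions (the stable manifolds of its three undesired equilibria), and I must argue that the perturbation $\bm{h}_i$ does not inflate this exceptional set into one of positive measure; this is exactly what the almost-ISS notion of \cite{Angeli2011tac} is designed to guarantee. Care is needed because the estimate \eqref{eq:local_ISS_i} only certifies a bounded/local ISS statement near the desired equilibrium, whereas the claimed almost-global basin rests on the gain condition of Lemma~\ref{lm:aISS_i} that makes $\{\tilde{\bm{Q}}_i(0):\Phi_i<\phi_i\}$ cover almost all of $SO(3)$, so the two regimes must be stitched together. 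Finally, since convergence of agent $i$ is conditioned on convergence of its neighbours, I would invoke the triangular cascade structure together with a Fubini-type argument to show that the product of the almost-global basins over $i=2,\ldots,n$ is again of full measure, which completes the proof.
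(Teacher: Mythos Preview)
Your proposal is correct and follows essentially the same route as the paper: induction on the agent index with base case Theorem~\ref{thm:stability_of_agent12}, then at each step combine Theorem~\ref{thm:unforced_system_i} (almost global asymptotic stability of the unforced system), Lemma~\ref{lm:vanishing_input_i} (vanishing of $\bm{h}_i$), and Lemma~\ref{lm:aISS_i} (almost-global ISS) via the almost-ISS cascade result of \cite{Angeli2011tac}. Your additional discussion of the measure-zero bookkeeping and the Fubini-type argument for the product of basins is more careful than the paper, which leaves these points implicit.
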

\begin{proof}
The desired equilibrium $(\tilde{\bm{Q}}_{i}=\bm{I}_3,\bm\omega_i^i=\bm{0})$ of the unforced system \eqref{eq:unforced-system_i} is almost globally asymptotically stable (Theorem \ref{thm:unforced_system_i}). The input $\bm{h}_i(t)$ is bounded and converges to zero asymptotically by Lemma \ref{lm:vanishing_input_i} and the cascade system \eqref{eq:cascade_system_i} is input-to-state stable with respect to $\bm{h}_i(t)$ (Lemma \ref{lm:aISS_i}). Thus, the desired equilibrium $(\tilde{\bm{Q}}_{i}=\bm{I}_3,\bm\omega_i^i=\bm{0})$ of the cascade system \eqref{eq:cascade_system_i} is almost globally asymptotically stable \cite{Angeli2011tac}.
\end{proof}

Finally, by invoking induction, we conclude that $\bm{R}_i$ converges to $\bm{R}_1$, or equivalently, the $i$-th local coordinate system aligns to that of the first agent, almost globally and asymptotically for all $i\in \mc{V}$.
\section{Simulation}\label{sec:simulation}

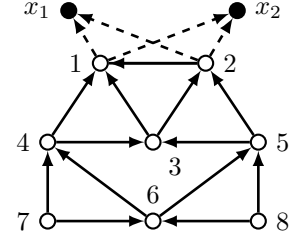
\begin{figure}[t]
\centering
\begin{tikzpicture}[scale=1.4]
\node[place] (p1) at (0,0) [label=left:$1$]{};
\node[place] (p2) at (1,0) [label=right:$2$]{};
\node[place] (p3) at (0.5,-0.75) [label=below right:$3$]{};
\node[place] (p4) at (-0.5,-0.75) [label=left:$4$]{};
\node[place] (p5) at (1.5,-0.75) [label=right:$5$]{};
\node[place] (p6) at (.5,-1.5) [label=above:$6$]{};
\node[place] (p7) at (-.5,-1.5) [label=left:$7$]{};
\node[place] (p8) at (1.5,-1.5) [label=right:$8$]{};
\node[place, black] (x1) at (-0.3,0.5) [label=left:$x_1$]{};
\node[place, black] (x2) at (1.3,0.5) [label=right:$x_2$]{};

\draw[line width=1pt,->] (p3)[frame]  -- (p1);
\draw[line width=1pt,->] (p3)[frame]  -- (p2);
\draw[line width=1pt,->] (p4)[frame]  -- (p3);
\draw[line width=1pt,->] (p4)[frame]  -- (p1);
\draw[line width=1pt] (p2)  -- (p1);
\draw[line width=1pt,->] (p5)[frame]  -- (p3);
\draw[line width=1pt,->] (p5)[frame]  -- (p2);
\draw[line width=1pt,->] (p6)[frame]  -- (p4);
\draw[line width=1pt,->] (p6)[frame]  -- (p5);
\draw[line width=1pt,->, dashed] (p1)[frame]  -- (x1);
\draw[line width=1pt,->, dashed] (p2)[frame]  -- (x1);
\draw[line width=1pt,->, dashed] (p1)[frame]  -- (x2);
\draw[line width=1pt,->, dashed] (p2)[frame]  -- (x2);
\draw[line width=1pt,->] (p2)[frame]  -- (p1);
\draw[line width=1pt,->] (p7)[frame]  -- (p4);
\draw[line width=1pt,->] (p7)[frame]  -- (p6);
\draw[line width=1pt,->] (p8)[frame]  -- (p6);
\draw[line width=1pt,->] (p8)[frame]  -- (p5);
\end{tikzpicture}
\caption{The graph topology $\mc{G}(\mc{V},\mc{E})$ of eight agents and two landmarks $x_1$ and $x_2$.}
\label{fig:sim_graph}
\end{figure}

\begin{figure*}[t]
\begin{subfigure}[t]{0.3\textwidth}
\centering
\includegraphics[height=4.2cm]{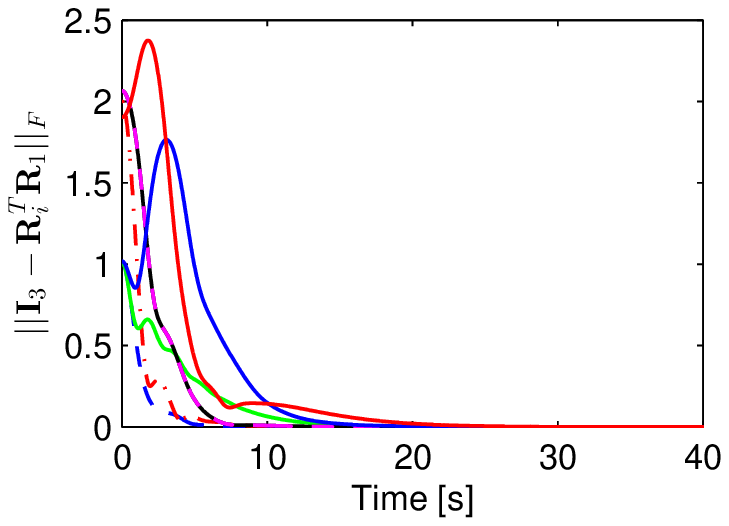}
\caption{}
\label{fig:orient_err}
\end{subfigure}
\centering
\begin{subfigure}[t]{0.34\textwidth}
\centering
\includegraphics[height=5.5cm]{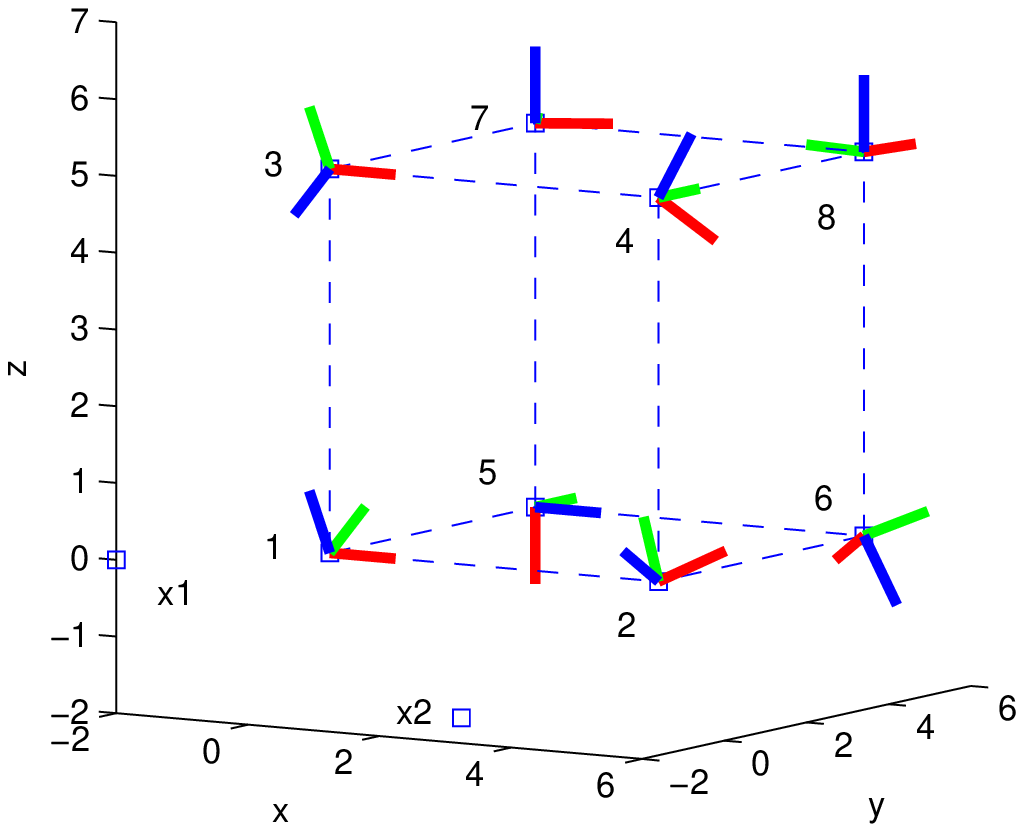}
\caption{}
\label{fig:initial_orient}
\end{subfigure}
\begin{subfigure}[t]{0.34\textwidth}
\centering
\includegraphics[height=5.5cm]{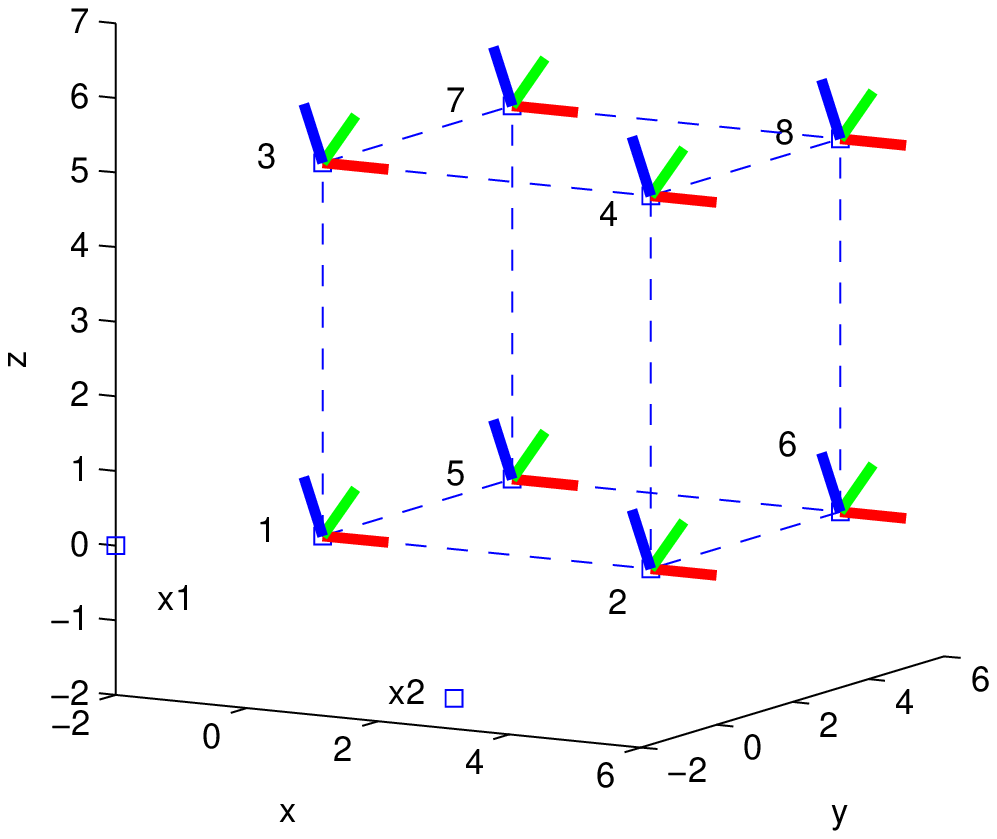}
\caption{}
\label{fig:final_orient}
\end{subfigure}
\caption{Orientation alignment of eight agents in $\mb{R}^3$. (a) Orientation alignment errors $||\bm{I}_3-{\bm{R}}_{i}^\top\bm{R}_1||_\text{F}$. (b) Initial orientations of the agents (local coordinate axes are shown in red-green-blue). (c) Final orientations.}
\label{fig:sim}
\end{figure*}
Consider a leader-follower network of eight agents ($n=8$) and two landmarks in $\mb{R}^3$ whose interaction graph is depicted in Fig. \ref{fig:sim_graph}. The positions and initial orientations, whose coordinate axes are shown in red-green-blue, of the agents are depicted in Fig. \ref{fig:initial_orient}, respectively. In particular, the initial orientations of the agents are chosen as: $\bm{R}_1=\mathrm{rotX}(\pi/6),\bm{R}_2=\mathrm{rotX}(\pi/3)\mathrm{rotZ}(\pi/6),\bm{R}_3=\mathrm{rotX}(2\pi/3),\bm{R}_4=\mathrm{rotY}(\pi/6),\bm{R}_5=\mathrm{rotY}(\pi/2),\bm{R}_6=\mathrm{rotY}(5\pi/6)\mathrm{rotZ}(\pi/6), \bm{R}_7=\mathrm{rotZ}(\pi/6)$, and $\bm{R}_8=\mathrm{rotZ}(8\pi/9)$, where $\mathrm{rotX}(\cdot),\mathrm{rotY}(\cdot)$ and $\mathrm{rotZ}(\cdot)$ denote the rotations about $x$, $y$ and $z$ axes, respectively. It is observed that the orientations of the agents converge to the orientation of the agent $1$ asymptotically as the orientation alignment errors $||\bm{I}_3-\bm{R}_{i}^\top\bm{R}_1||_\text{F}$, where $||\cdot||_\text{F}$ denotes the Frobenius norm, tend to zeros asymptotically as $t\rightarrow\infty$, as shown in Fig. \ref{fig:orient_err}.

The final orientations of the agents are shown in Fig. \ref{fig:final_orient}. A video of the simulation is available at \url{https://youtu.be/amP5svWhfrU}.
\section{Conclusion}\label{sec:Conclusion}
In this work, we have studied the orientation alignment of a type of leader-follower networks of multiple stationary agents with the first agent being the leader in the three-dimensional space. The agents in the network have only direction measurements to two neighbors and the first two agents can additionally measure directional vectors to some common landmarks. An orientation alignment scheme was proposed for the agents based exclusively in the direction constraints with no need for a global coordinate frame common to all agents or the construction of the agents' orientation matrices. We showed that the agents can achieve an orientation consensus almost globally and asymptotically. Simulation results were provided to illustrate the theoretical analysis.

A potential research direction is to investigate the formation control and formation maneuvering of multi-agent systems based on the direction-only measurements and possibly in the presence of some leader agents with global information. 
Direction-only orientation alignment for more practical agent's dynamics with arbitrary pre-specified time stability will be also addressed in a future work.
 
\appendix
\renewcommand{\thesection}{A\arabic{section}}
%
%
\subsection{Proof of Lemma \ref{lm:almost_surely_distinct}}\label{app:almost_surely_distinct}
It is noted that each term $\bm{b}_{2j}\bm{b}_{2j}^\top,~j\in \mc{N}_2\cup\mc{V}_a$ in $\bm{K}_{2}$ is positive semi-definite whose range is $\text{range}(\bm{b}_{2j}\bm{b}_{2j}^\top)=\text{span}(\bm{b}_{2j})$. In addition, because agents $1$, $2$ and the landmarks are non-coplanar (Assumption \ref{ass:non_colocated}), $\{\bm{b}_{21},\{\bm{b}_{2n_l}\}_{x_l\in\mc{V}_a}\}$ spans the $\mb{R}^3$. It follows that $\bm{K}_{2}$ is positive definite. 

Since $\bm{K}_{2}$ is symmetric and positive definite, it has positive real eigenvalues.
Consider the discriminant of the cubic polynomial $\text{det}(\lambda\bm{I}_3-\bm{K}_{2})$. It is noted that the discriminant has real coefficients, and is a polynomial of the scalars $k_{21}$ and $k_{2x_l}$. Futhermore, since all eigenvalues of $\bm{K}_{2}$ are real the discriminant is nonnegative \cite[Chap. 10.3]{Irving2004}, and a fundamental result of algebraic geometry indicates that the set of the scalars such that the discriminant is zero is of measure zero \cite{Caron2005}. Consequently, the discriminant of $\text{det}(\lambda\bm{I}_3-\bm{K}_{2})$ is positive for almost all scalars $k_{21}$ and $k_{2x_l}$, and hence $\bm{K}_{2}$ has distinct eigenvalues for almost all scalars $k_{21}$ and $k_{2x_l}$ \cite[Chap. 10.3]{Irving2004}.

\subsection{Proof of Lemma \ref{lm:properties_error_functions12}}\label{app:properties_error_functions12}
Consider the derivative of $\Psi_{12}:=(1+\bm{b}_{12}^1\cdot\bm{b}_{21}^2)$ as follows:
\begin{align*}
\dot{\Psi}_{12}&=\dot{\bm{b}}_{12}^1\cdot\bm{b}_{21}^2+\bm{b}_{12}^1\cdot\dot{\bm{b}}_{21}^2\stackrel{\eqref{eq:derivative_of_direction}}{=}\bm{b}_{12}^1\cdot(\bm{b}_{21}^2\times \bm{\omega}_2^2)\\
&\stackrel{\eqref{eq:cross_prod_4}}{=}\bm{\omega}_2^2\cdot (\bm{b}_{12}^1\times \bm{b}_{21}^2)=\bm\omega_2^2\cdot\bm{e}_{21},
\end{align*}
By using similar calculations for the other terms and substituting into the time derivative of $\Phi_2$ in \eqref{eq:error_function12} we obtain (i).

We show (ii) as follows. Consider the time derivative of $\bm{e}_{21}$ as follows.
\begin{align*}
\dot{\bm{e}}_{21}&=\bm{b}_{12}^1\times\bm{b}_{21}^2\times\bm{\omega_2^2}\\
\Longleftrightarrow ||\dot{\bm{e}}_{21}||&\leq \abs{\bm\omega_2^2}^2.
\end{align*}
It can be shown similarly that $||\dot{\bm{e}}_{in_k}||\leq||\bm\omega_2^2||^2,\forall x_k\in \mc{V}_a$. Consequently,
$$||\dot{\bm{e}}_2||\leq \Big(k_{21}+\sum_{l\in\mc{V}_a}k_{2x_l}\Big)\abs{\bm\omega_2^2}^2.$$

The proof of (iii) follows from similar arguments in \cite[Lem. 3]{Quoc2019autoExtend} and \cite[Prop. 1]{TLee2015tac} and is omitted.

\subsection{Proof of Theorem \ref{thm:stability_of_agent12}}\label{app:stability_of_agent12}
Consider the Lyapunov candidate function
\begin{align*}\label{eq:Lyapunov_function12}
V_{2}&=\Phi_{2}+\frac{1}{2}||\bm\omega_2^2||^2+k_V(\bm{e}_2^\top\bm\omega_2^2) \numberthis
\end{align*}
Following Lemma \ref{lm:properties_error_functions12} (iii) we have
\begin{align} \label{eq:positive_def_V12}
\frac{1}{2}\bm{z}_2^\top\left[\begin{matrix}
\sigma_{2} &-k_V\\
-k_V &1
\end{matrix}\right]\bm{z}_2\leq V_{2}\leq
\frac{1}{2}\bm{z}_2^\top\left[\begin{matrix}
\gamma_{2} &k_V\\
k_V &1
\end{matrix}\right]\bm{z}_2,
\end{align}
where $\bm{z}_2:=[||\bm{e}_2||,||\bm\omega_2^2||]^\top$. We note that if $k_V<\sqrt{\sigma_{2}}$ both matrices on the left and right hand sides of \eqref{eq:positive_def_V12} are positive definite and it is also true for $V_{2}$. The derivative of $V_{2}$ along the trajectory of \eqref{eq:control_law_2} is given as
\begin{align*}
\dot{V}_{2}&=(\bm\omega_2^2)^\top \bm{e}_2-k_\omega||\bm\omega_2^2||^2-\bm{e}_2^\top\bm\omega_2^2
+k_V(\dot{\bm{e}}_2^\top\bm\omega_2^2+\bm{e}_2^\top\dot{\bm\omega}_2^2)
\\
&\leq-k_\omega||\bm\omega_2^2||^2+k_V(k_{2}^{tot}||\bm\omega_2^2||^2-k_\omega\bm{e}_2^\top{\bm\omega}_2^2-\bm{e}_2^\top\bm{e}_2)\\
&=-(k_\omega-k_Vk_{2}^{tot})||\bm\omega_2^2||^2-k_V||\bm{e}_2||^2-k_Vk_\omega\bm{e}_2^\top\bm\omega_2^2\\
&\leq -\frac{1}{2}\bm{\xi}_{2}^\top\left[\begin{matrix}
2k_V &k_Vk_\omega \\
k_Vk_\omega &2(k_\omega-k_Vk_{2}^{tot})
\end{matrix}\right]\bm{\xi}_{2},
\end{align*}
where $\bm{\xi}_{2}=[||\bm{e}_2||,||\bm\omega_2^2||]^\top\in \mb{R}^2$, and $k_{2}^{tot}=k_{21}+\sum_{l\in\mc{V}_a}k_{2x_l}$. It can be verified that if $k_V<\frac{4k_\omega}{4k_{2}^{tot}+k_{\omega}^2}$ the matrix in the above inequality is positive definite. It follows that $\dot{V}_{2}$ is negative definite. As a result, $\bm{\xi}_{2}\rightarrow \bm{0}$ as $t\rightarrow \infty$ according to LaSalle's invariance principle. As a result, as $\bm{e}_2\rightarrow \bm{0}$ and $\bm\omega_2^2\rightarrow \bm{0}$ as $t\rightarrow \infty$, we have $\dot{\Phi}_{2}\rightarrow 0$ by Lemma \ref{lm:properties_error_functions12}(i). Consequently, the equilibrium points of the system satisfy $\bm\omega_2^2=\bm{0}$ and $\tilde{\bm{Q}}_{2}$ are critical points of $\Phi_{2}$, which shows (i).

Using Lemma \ref{lm:characteristics_stationary_points} we show that the three undesired equilibria are unstable as follows. Without loss of generality, consider the first undesired equilibrium point at which we have
\begin{equation*}
\Phi_{2}(\bm{U}\bm{D}_1\bm{U}^\top)=\text{tr}(\bm{G}_{2}(\bm{I}_3-\bm{D}_1))=2(\lambda_2+\lambda_3).
\end{equation*}
Consider the Lyapunov function $U_{2}=2(\lambda_2+\lambda_3)-V_{2}$,
which satisfies $U_2(\bm{U}\bm{D}_1\bm{U}^\top,~\bm\omega_2^2=\bm{0})=0.$ Due to the continuity of $U_2$, when $\bm\omega_2^2$ is sufficiently small we can select $\tilde{\bm{Q}}_{2}$ arbitrary close to the undesired point $\bm{U}\bm{D}_1\bm{U}^\top$ (which is either a global maximum or saddle point of $\Phi_{2}$ by Lemma \ref{lm:characteristics_stationary_points}) such that $U_{2}>0$. However, $=\dot{U}_2=-\dot{V}_{2}>0$. It follows that the undesired equilibrium $(\tilde{\bm{Q}}_{2}=\bm{U}\bm{D}_1\bm{U}^\top,~\bm\omega_2^2=\bm{0})$ is unstable due to the Chetaev's theorem \cite[Thm. 4.3]{Khalil2002}.
Consequently, the desired equilibrium of the system is globally asymptotically stable except on a set of measure zero which contains stable manifolds of the undesired equilibria.

The desired equilibrium point is locally exponentially stable in the region satisfying $\Phi_{2}<\phi:=2\min \{\lambda_1+\lambda_2,\lambda_1+\lambda_3,\lambda_2+\lambda_3\}$.

\section*{Acknowledgments}

This work is supported by the BK21 FOUR Program of the National Research Foundation Korea (NRF) grant funded by the Ministry of Education(MOE), in part by the Future Mobility Testbed Development through IT, AI, and Robotics, and in part by the National Research Foundation (NRF) of Korea under the grant NRF2017R1A2B3007034.


\nocite{}
\bibliographystyle{IEEEtran}
\bibliography{quoc2018,quoc2019}

\end{document}